\documentclass[11pt, reqno]{amsart}
\usepackage{amscd}        

\usepackage[english]{babel}

\usepackage{booktabs}
\usepackage[svgnames, dvipsnames, table]{xcolor}
\usepackage[colorlinks = true, allcolors = cyan, pagebackref=true, colorlinks, pdfencoding=auto]{hyperref}


\usepackage{tabu}

\usepackage{siunitx} 

\usepackage{orcidlink}

\usepackage{pdflscape}

\usepackage{float}

\usepackage{nomencl}

\usepackage{algorithm}
\usepackage{algpseudocode}

\usepackage{multirow}

\usepackage{tikz-cd}
\usepackage{graphicx}
\usepackage{rotating}
\usepackage{diagbox}
\usepackage{amssymb}
\usepackage{epstopdf}
\usepackage{tikz}
\definecolor{UCRceleste}{RGB}{0,192,243}
\definecolor{Crimson}{RGB}{220, 20, 60}
\definecolor{Turquoise}{HTML}{04d4f0}
\definecolor{GunmetalGray}{HTML}{1ca6a0}
\definecolor{HotPink}{HTML}{f652a0}
\definecolor{BlueGrotto}{HTML}{059dc0}
\definecolor{GaloisBlue}{HTML}{6495ED}
\definecolor{GaloisRed}{HTML}{DE3163}
\definecolor{mintgreen}{RGB}{152,255,152}
\definecolor{pinksalmon}{RGB}{255,102,102}
\definecolor{hueso}{RGB}{245,245,220}
\definecolor{marfil}{RGB}{255,253,208}
\definecolor{amarillo}{RGB}{255,255,0}
\usetikzlibrary{decorations.markings,arrows}
\usetikzlibrary{decorations.pathreplacing}
\DeclareGraphicsRule{.tif}{png}{.png}{`convert #1 `dirname #1`/`basename #1 .tif`.png}

\usepackage[inner=1.0in,outer=1.0in,bottom=1.0in, top=1.0in]{geometry}

\numberwithin{equation}{section}

\newtheorem{theorem}{Theorem}[section]
\newtheorem{lemma}[theorem]{Lemma}
\newtheorem{proposition}[theorem]{Proposition}


\makeatletter
\def\moverlay{\mathpalette\mov@rlay}
\def\mov@rlay#1#2{\leavevmode\vtop{%
   \baselineskip\z@skip \lineskiplimit-\maxdimen
   \ialign{\hfil$\m@th#1##$\hfil\cr#2\crcr}}}
\newcommand{\charfusion}[3][\mathord]{
    #1{\ifx#1\mathop\vphantom{#2}\fi
        \mathpalette\mov@rlay{#2\cr#3}
      }
    \ifx#1\mathop\expandafter\displaylimits\fi}
\makeatother


\newcommand{\suchthat}{\;\ifnum\currentgrouptype=16 \middle\fi|\;}

\newcommand{\Z}{\mathbb{Z}}

\newcommand{\Q}{\mathbb{Q}}
\newcommand{\R}{\mathbb{R}}

\newcommand{\ord}[1]{\operatorname{ord}#1}

\theoremstyle{definition}
\newtheorem{remark}[theorem]{Remark}

\def\R{{\mathbb R}}
\def\Z{{\mathbb Z}}
\def\Q{{\mathbb Q}}

\def\O_K{{\Cal{O}_{K}}}
\def\O_F{{\Cal{O}_{F}}}
\def\N_F{{\Cal{N}_{F/\Q}}}


\usepackage{listings}
\definecolor{sagebrown}{RGB}{176, 92, 10}
\definecolor{sageblue}{RGB}{44, 45, 254}
\definecolor{sagepurple}{RGB}{151, 57, 164}
\definecolor{sagegreen}{RGB}{18, 103, 68}
\definecolor{sagered}{RGB}{170, 16, 15}

\lstdefinestyle{SageMath-style}{
    backgroundcolor=\color{white},   
    commentstyle=\color{sagebrown},
    keywordstyle=\color{sagepurple},
    keywordstyle = [2]{\color{sageblue}},
    keywordstyle = [3]{\color{yellow}},
    numberstyle=\tiny\color{sagegreen},
    stringstyle=\color{sagered},
    basicstyle=\ttfamily\footnotesize,
    breakatwhitespace=false,         
    breaklines=true,                 
    captionpos=b,                    
    keepspaces=true,                 
    numbers=left,                    
    numbersep=5pt,                  
    showspaces=false,                
    showstringspaces=false,
    showtabs=false,                  
    tabsize=2
}

\lstset{literate=%
   *{0}{{{\color{sagegreen}0}}}1
    {1}{{{\color{sagegreen}1}}}1
    {2}{{{\color{sagegreen}2}}}1
    {3}{{{\color{sagegreen}3}}}1
    {4}{{{\color{sagegreen}4}}}1
    {5}{{{\color{sagegreen}5}}}1
    {6}{{{\color{sagegreen}6}}}1
    {7}{{{\color{sagegreen}7}}}1
    {8}{{{\color{sagegreen}8}}}1
    {9}{{{\color{sagegreen}9}}}1
}

\lstset{style = SageMath-style}

\begin{document}

\title{Counting elliptic curves over $\Q$ with bounded naive height}

\author[]{Adrian Barquero-Sanchez\orcidlink{0000-0001-7847-2938} and Daniel Mora-Mora\orcidlink{0009-0000-9302-0497}}

\address{Escuela de Matem\'atica, Universidad de Costa Rica, San Jos\'e 11501, Costa Rica}
\address{Centro de Investigación en Matemática Pura y Aplicada, Universidad de Costa Rica, San Jos\'e 2060, Costa Rica}
\email{adrian.barquero\_s@ucr.ac.cr}
\email{daniel.moramora@ucr.ac.cr}

\subjclass{11G05, 11G15, 11N45}
\keywords{Elliptic curve, counting, asymptotic formula, naive height, j-invariant, complex multiplication}

\begin{abstract}

In this paper, we give exact and asymptotic formulas for counting elliptic curves $ E_{A,B} \colon y^2 = x^3 + Ax + B $ with $ A, B \in \mathbb{Z} $, ordered by naive height. We study the family of all such curves and also several natural subfamilies, including those with fixed $j$-invariant and those with complex multiplication (CM). In particular, we provide formulas for two commonly used normalizations of the naive height appearing in the literature: the \emph{calibrated naive height}, defined by
\[
H^{\mathrm{cal}}(E_{A,B}) := \max\{ 4|A|^3, 27B^2 \},
\]
and the \emph{uncalibrated naive height}, defined by
\[
H^{\mathrm{ncal}}(E_{A,B}) := \max\{ |A|^3, B^2 \}.
\]
In fact, we prove our theorems with respect to the more general naive height $H_{\alpha, \beta}(E_{A,B}) := \max\{ \alpha |A|^3, \beta B^2 \}$, defined for arbitrary real parameters $\alpha,\beta \geq 1$.

As part of our approach, we give a completely explicit parametrization of the curves $E_{A,B}$ with fixed $j$-invariant and bounded naive height, including explicit equations for the minimal-height models and a description of all curves in the family as twists of them. We also include tables comparing and verifying our theoretical predictions with exact counts obtained via exhaustive computer searches, and we compute data for CM elliptic curves of naive height up to $10^{30}$. Code in \texttt{SageMath} is provided to compute all exact and asymptotic formulas appearing in the paper.

\end{abstract}

\maketitle

\section{Introduction} 

In recent decades, arithmetic statistics for elliptic curves has been a very active area of research in number theory. Although there are various ways to order elliptic curves when studying their distribution with respect to a given property—for example, by the size of their conductor or their Faltings height—it has proven especially fruitful, both theoretically and computationally, to order them using simple height functions defined directly in terms of their defining coefficients. In particular, in this paper we will use the notion of the \textit{naive height}, which measures the curves by the size of their coefficients. More precisely, for an elliptic curve in short Weierstrass form $E_{A, B} \colon y^2 = x^3 + Ax + B$, with $A, B \in \Z$, we will consider two common normalizations of this height that are widely used in the literature, which, following the terminology introduced by Balakrishnan et al. in \cite{Bal16} in order to differentiate them, we will call the \textit{(calibrated) naive height}, defined by
\begin{align*}
H^{\mathrm{cal}}(E_{A,B}) := \max{\{ 4|A|^3, 27B^2\}}
\end{align*}
and the \textit{uncalibrated naive height}, defined by
\begin{align*}
H^{\mathrm{ncal}}(E_{A,B}) := \max{\{ |A|^3, B^2\}}.
\end{align*}

Over the past three decades, several important works in the arithmetic statistics of elliptic curves have relied on these normalizations of the naive height. Notably, Bhargava and Shankar studied the distribution of ranks \cite{BS15a, BS15b}, while Harron and Snowden \cite{HS17}, as well as Cullinan, Kenney, and Voight \cite{CKV22}, examined the distribution of curves with prescribed torsion subgroups. More recently, the problem of counting elliptic curves admitting an $m$-isogeny for various $m$ has attracted attention (see e.g. \cite{PPV20}, \cite{MV23}, \cite{BS24}, \cite{santiago25}). Another direction, pursued by Fouvry, Nair, and Tenenbaum \cite{FNT92}, investigates the density of curves $E_{A,B}$ in connection with the Szpiro conjecture.

Exploring a different direction, the first author and Calvo-Monge recently studied the distribution of CM elliptic curves over $\Q$ when ordered by calibrated naive height \cite{BSCM25}. Recall that an elliptic curve $E/\Q$ is said to have \textit{(potential) complex multiplication} if its endomorphism ring $\operatorname{End}_{\overline{\Q}}(E)$ is larger than $\Z$. It is known that in that case we must have
$$
\operatorname{End}_{\overline{\Q}}(E) \cong \mathbb{Z}+f \mathcal{O}_K,
$$
where $\mathcal{O}_K$ is the ring of integers in an imaginary quadratic field $K = \Q(\sqrt{d_K})$ of discriminant $d_K < 0$ and $f \geq 1$ is an integer, and the theory of complex multiplication determines that there are exactly thirteen possibilities for the pair $(d_K, f)$, which are given in Table \ref{tab:cm-pairs}. A concise overview of this is provided in \cite[Section 1.2]{BSCM25}.

\begin{table}[H]
\centering
\begin{tabular}{|c|c|c|c|c|c|c|c|c|c|c|c|c|c|}
\hline
$d_K$ & $-3$ & $-3$ & $-3$ & $-4$ & $-4$ & $-7$ & $-7$ & $-8$ & $-11$ & $-19$ & $-43$ & $-67$ & $-163$ \\
\hline
$f$ & $1$  & $2$  & $3$  & $1$  & $2$  & $1$  & $2$  & $1$  & $1$   & $1$   & $1$   & $1$   & $1$    \\
\hline
\end{tabular}
\caption{The thirteen possible values of the pair $(d_K, f)$ for CM elliptic curves over $\Q$.}
\label{tab:cm-pairs}
\end{table}

Now, since the $j$-invariant of an elliptic curve over $\Q$ completely determines its $\overline{\Q}$-isomorphism class—and that determines its endomorphism ring—, in order to study the distribution of CM elliptic curves over $\Q$, the first author and Calvo-Monge tackled in \cite{BSCM25} the more general problem of how elliptic curves with a fixed $j$-invariant are distributed.

In particular, in this paper we make significant improvements to some of the results of \cite{BSCM25}. More specifically, in that paper the authors proved asymptotic formulas for the number of $\Q$-isomorphism classes of elliptic curves of the form $E_{A,B}$ with $A, B \in \Z$, having a fixed $j$-invariant and calibrated naive height bounded by $X$ (see \cite[Theorem 1.6]{BSCM25}). The count was carried out by enumerating those curves $E_{A,B}$ satisfying the height bound and having the prescribed $j$-invariant. For example, they showed that for $j$-invariant 0, the number of such curves with calibrated height up to $X$ is of the form
$$
\frac{2}{3^{3 / 2} \zeta(6)} X^{1/2} + O(X^{1/12})
$$
and for the curves with $j$-invariant $1728$ the number of curves satisfies the asymptotic formula
$$
\frac{2^{1 / 3}}{\zeta(4)} X^{1/3} + O(X^{1/12}).
$$
However, when $j \in \Q \smallsetminus \{ 0, 1728 \}$, in that paper it was only possible to prove that this number was $O(X^{1/6})$. In this paper we improve that count and show that it is actually of the form
$$
\frac{C_j}{\zeta(2)} X^{1/6} + O(X^{1/12})
$$
for a completely explicit and easily computable constant $C_j$ that only depends on the value of $j \in \Q \smallsetminus \{ 0, 1728 \}$ and on the chosen normalization of the naive height (see Theorem \ref{thm:fixed-j-theorem-generalized},  Table \ref{table:main-terms-calibrated-uncalibrated} and equation \eqref{eqn:cj-constant-definition}). 

Moreover, as a key ingredient to our improvements, we have found a completely explicit way to parametrize the family of elliptic curves $E_{A, B}$ having $j(E_{A, B}) = j$, in terms of an integer parameter. In particular, our formula gives the two curves of the form $E_{A, B}$ of minimal naive height for any given $j$-invariant and the parametrization gives all the curves with fixed $j$-invariant essentially as the set of quadratic twists of these curves of minimal naive height (see Theorem \ref{thm:parametrization-and-exact-count-curves-with-j-introduction} and Remark \ref{remark:parametrization-remark}). This parametrization also allows us to prove \textit{exact formulas} for the number of all elliptic curves of the form $E_{A, B}$ (see Theorem \ref{thm:global-counts-generalized-intro}), for those with fixed $j$-invariant (see Theorem \ref{thm:fixed-j-theorem-generalized}) and for those with complex multiplication (see Theorem \ref{thm:CM-theorem-generalized}), when ordered by calibrated or uncalibrated naive height.

For example, to illustrate the results mentioned in the previous paragraph, if $E/\Q$ is a CM elliptic curve with $\operatorname{End}_{\overline{\Q}}(E) \cong \Z\left[\frac{1+\sqrt{-163}}{2}\right]$, the corresponding $j$-invariant is $j=-2^{18} \cdot 3^3 \cdot 5^3 \cdot 23^3 \cdot 29^3$ and in that particular case, Theorem \ref{thm:parametrization-and-exact-count-curves-with-j-introduction} implies that every elliptic curve $E_{A, B} \colon y^2 = x^3 + Ax + B$ having $\operatorname{End}_{\overline{\Q}}(E_{A, B})  \cong \Z\left[\frac{1+\sqrt{-163}}{2}\right]$ is contained in the parametrized family of curves
\begin{align}\label{eqn:curves-163-parametrization}
E_{A(m), B(m)} \colon y^2 = x^3  + A(m)x + B(m),
\end{align}
where $m \in \Z \smallsetminus \{ 0 \}$ and the coefficients $A(m)$ and $B(m)$ are given by
\begin{align*}
A(m) := -(2^{4} \cdot 5 \cdot 23 \cdot 29 \cdot 163)m^2 \quad \text{and} \quad
B(m) := -(2 \cdot 7 \cdot 11 \cdot 19 \cdot 127 \cdot 163^{2})m^3.
\end{align*}
The two curves of minimal (calibrated) naive height with complex multiplication by this order are the curves given by the values $m = \pm 1$, that is, the curves
\begin{align}\label{eqn:curves-of-minimal-height-163}
y^2=x^3 - 8697680x \pm 9873093538.
\end{align}
It is interesting to mention that in \cite[Table 4]{BSCM25}, the authors show the results of a computer search for all elliptic curves $E_{A, B}$ with complex multiplication and calibrated naive height at most $10^{10}$, in which no curves were found with CM by the order $\Z\left[\frac{1+\sqrt{-163}}{2}\right]$. Our current results—which were found theoretically and not as the product of a computer search—explain this fact, since the curves \eqref{eqn:curves-of-minimal-height-163} of minimal calibrated naive height in that case have height
$$
\num{2631905352272628650988} \approx 2.6319\times 10^{21}.
$$
It is also noteworthy that, at the time of writing this\footnote{June 2025}, the curves \eqref{eqn:curves-of-minimal-height-163} do not appear in the LMFDB, and in fact, there are only six elliptic curves with CM by the order $\Z\left[\frac{1+\sqrt{-163}}{2}\right]$ in the database, only two of which are in short Weierstrass form; explicitly, they are the elliptic curve \href{https://www.lmfdb.org/EllipticCurve/Q/425104/g/1}{425104.g1} and the elliptic curve \href{https://www.lmfdb.org/EllipticCurve/Q/425104/g/2}{425104.g2}, which correspond to the curves given by the values $m=2$ and $m=-326$ in the parametrization \eqref{eqn:curves-163-parametrization}, respectively.\footnote{In the LMFDB, the elliptic curves are currently ordered by conductor, and out of the six curves with CM by the order $\Z\left[\frac{1+\sqrt{-163}}{2}\right]$ that appear in the database, the ones with the largest conductor are precisely the two curves \cite[\href{https://www.lmfdb.org/EllipticCurve/Q/425104/g/1}{Elliptic Curve 425104.g1}]{lmfdb} and \cite[\href{https://www.lmfdb.org/EllipticCurve/Q/425104/g/2}{Elliptic Curve 425104.g2}]{lmfdb}, which have conductor $2^4 \cdot 163^2 = \num{425104}$, whereas the curves \eqref{eqn:curves-of-minimal-height-163} of minimal calibrated naive height having CM by this order have conductor $2^6 \cdot 163^2 = \num{1700416}$.} 

As a complement to our theoretical results, in Section \ref{section:explicit-computations} we include explicit computations and data for CM elliptic curves ordered by naive height. These extend to very large height bounds and provide both curve counts and minimal-height representatives for each of the thirteen CM $j$-invariants.

\section{Definitions and Statement of Results}\label{section: definitions-and-results}

\subsection{Families of elliptic curves}

In this paper, we will study two widely used families of elliptic curves, which we now define. For each pair $(A, B) \in \Z^2$ we define the curve $E_{A, B} \colon y^2 = x^3 + Ax + B$ and first consider the family
\begin{align}\label{def:E-tilde-family}
\widetilde{\mathcal{E}} := \{ E_{A, B} \suchthat \text{$A, B \in \Z$ and $\Delta_{E_{A, B}} := -16(4A^3 + 27B^2) \neq 0$}  \}
\end{align}
consisting of all elliptic curves in short Weierstrass form with coefficients in $\Z$. It is known that each elliptic curve $E/\Q$ is $\Q$-isomorphic to a unique elliptic curve $E_{A, B} \in \widetilde{\mathcal{E}}$ such that there is no prime number $p$ with $p^4 \mid A$ and $p^6 \mid B$. Thus, we also consider the family
\begin{align}\label{def:E-family}
\mathcal{E} := \{ E_{A, B} \in \widetilde{\mathcal{E}} \suchthat \text{there is no prime number $p$ with $p^4 \mid A$ and $p^6 \mid B$}  \}.
\end{align}
This subfamily of $\widetilde{\mathcal{E}}$ is a complete set of representatives for the $\Q$-isomorphism classes of 
elliptic curves $E/\Q$.

\subsection{Fixed \texorpdfstring{$j$}{j}-invariant and CM subfamilies}

Next, we will also consider the following subfamilies of $\widetilde{\mathcal{E}}$ and $\mathcal{E}$. Recall first that the $j$-invariant of an elliptic curve $E_{A, B}$ is given by
$$
j(E_{A, B}) = 1728 \frac{4A^3}{4A^3 + 27B^2}.
$$
Then, we consider the subfamilies of $\widetilde{\mathcal{E}}$ and $\mathcal{E}$ obtained by restricting only to elliptic curves with a fixed $j$-invariant. Thus, for each $j \in \Q$, define
\begin{align}\label{def: elliptic-j-invariant-family}
\widetilde{\mathcal{E}}_j := \{ E_{A, B} \in \widetilde{\mathcal{E}} \suchthat j(E_{A, B}) = j \} \quad \text{and} \quad \mathcal{E}_j := \{ E_{A, B} \in \mathcal{E} \suchthat j(E_{A, B}) = j \}.
\end{align}

Similarly, we define the subfamilies of $\widetilde{\mathcal{E}}$ and $\mathcal{E}$ consisting of the elliptic curves that have \textit{(potential) complex multiplication}, i.e., their endomorphism ring $\operatorname{End}_{\overline{\Q}}(E_{A, B})$ is larger than $\Z$. Thus, we define
\begin{align}
\widetilde{\mathcal{E}}^{\mathrm{cm}} := \{ E_{A, B} \in \widetilde{\mathcal{E}} \suchthat \text{$E_{A, B}$ has potential CM} \} \end{align}
and
\begin{align}
\mathcal{E}^{\mathrm{cm}} := \{ E_{A, B} \in \mathcal{E} \suchthat \text{$E_{A, B}$ has potential CM} \}.
\end{align}

It is known that the elliptic curves over $\Q$ that have potential complex multiplication are completely determined by their $j$-invariant. In fact, there are exactly thirteen $j$-invariants that correspond to elliptic curves over $\Q$ having potential complex multiplication, and they are the elements of the set (see e.g. \cite[Table 3]{BSCM25} and \cite[p. 483]{Sil94})
\begin{align*}
\mathcal{J}^{\textrm{cm}} = \{
& 0, 1728, -3375, 8000, -32768, 54000, 287496, -12288000,\\
& 16581375, -884736, -884736000, -147197952000, -262537412640768000\}.
\end{align*}
This implies that we can decompose the families $\widetilde{\mathcal{E}}^{\textrm{cm}}$ and $\mathcal{E}^{\textrm{cm}}$ as disjoint unions over the thirteen CM $j$-invariants in $\mathcal{J}^{\mathrm{cm}}$, namely
\begin{align}\label{eq:Ecm-decomposition}
    \widetilde{\mathcal{E}}^{\mathrm{cm}} = \coprod_{j \in \mathcal{J}^{\textrm{cm}}} \widetilde{\mathcal{E}}_j \quad \text{and} \quad \mathcal{E}^{\mathrm{cm}} = \coprod_{j \in \mathcal{J}^{\textrm{cm}}} \mathcal{E}_j.
\end{align}

\subsection{Naive heights and finite subfamilies}

In order to study how the curves in the families that we defined above are distributed, we will use the notion of the naive height, which measures the curves by the size of their coefficients. In particular, as mentioned in the introduction, we will consider the \textit{(calibrated) naive height}, defined by
\begin{align}
H^{\mathrm{cal}}(E_{A,B}) := \max{\{ 4|A|^3, 27B^2\}}
\end{align}
and the \textit{uncalibrated naive height}, defined by
\begin{align}
H^{\mathrm{ncal}}(E_{A,B}) := \max{\{ |A|^3, B^2\}}.
\end{align}
We note that the discriminant of a curve $E_{A, B}$ is bounded in terms of these heights by
$$
|\Delta_{E_{A, B}}| \leq 32 H^{\mathrm{cal}}(E_{A, B}) \quad \text{and} \quad |\Delta_{E_{A, B}}| \leq 496 H^{\mathrm{ncal}}(E_{A, B}).
$$

The distinction of a calibrated and an uncalibrated naive height reflects the historical evolution of the asymptotic theory of elliptic curves. Initially, the uncalibrated naive height was utilized by Brumer \cite{Bru92} to systematically enumerate elliptic curves and study the distribution of their ranks. Later, the groundbreaking work of Bhargava and Shankar on the geometry of numbers of binary quartic forms \cite{BS15b}, showed that the calibrated height is better adapted to the invariant theory of binary quartic forms (see e.g. \cite[Section 3.4]{Poo13}). Consequently, modern arithmetic statistics has almost exclusively adopted the calibrated naive height.

 However, as we later show, both naive heights should produce equal asymptotics up to a constant factor. To unify our arguments and to provide more general results that apply both to the calibrated and the uncalibrated heights, we define the \textit{generalized naive height}, defined for any $\alpha,\beta \in \R_{\geq 1}$ by
\begin{align}
H_{\alpha,\beta}(E_{A,B}):=\max{\{\alpha |A|^3, \beta B^2\}}.
\end{align}
In terms of this height we have $H^{\mathrm{cal}} = H_{4, 27}$ and $H^{\mathrm{ncal}} = H_{1, 1}$.

For each of the infinite families of elliptic curves described above, we define corresponding finite subfamilies using the naive height functions introduced earlier. More precisely, given \( X > 0 \), a height function \( H \in \{ H^{\mathrm{cal}}, H^{\mathrm{ncal}}, H_{\alpha,\beta} \} \), and a family \( \mathcal{F} \in \{ \widetilde{\mathcal{E}}, \mathcal{E}, \widetilde{\mathcal{E}}_j, \mathcal{E}_j, \widetilde{\mathcal{E}}^{\mathrm{cm}}, \mathcal{E}^{\mathrm{cm}} \} \), we define the \textit{finite} subfamily
\[
\mathcal{F}(X; H) := \{ E_{A, B} \in \mathcal{F} \suchthat H(E_{A, B}) \leq X \}.
\]

\subsection{Auxiliary constants}

In order to state our results, we need to define the rational numbers that appear in equations \eqref{eqn:Na-definition} and \eqref{eqn:aj-definition} below. For $a \in \Q^{\times}$, let 
$$
a=(-1)^{\varepsilon} \prod_{p \in \mathbb{P}} p^{\operatorname{ord}_p(a)}
$$
be the prime factorization of $a$, where $\mathbb{P} = \{ 2, 3, 5, 7, 11, \dots \}$ denotes the set of all prime numbers. Also, for each $p \in \mathbb{P}$, we define the integer
\[
\alpha_{p}(a):= \begin{cases} 
\left \lceil\dfrac{\operatorname{ord}_{p}(a)}{2} \right \rceil & \text{if $\operatorname{ord}_{p}(a) \geq 0$,} \vspace{5pt}\\ \left \lceil\dfrac{\operatorname{ord}_{p}(a)}{3} \right \rceil & \text{if $\operatorname{ord}_{p}(a) < 0$.}
\end{cases}
\]
Then, using this we define
\begin{align}\label{eqn:Na-definition}
N_a:=\prod_{p \in \mathbb{P}} p^{\alpha_p(a)} \in \mathbb{Q}^{\times}.
\end{align}
Also, for each $j \in \Q \smallsetminus \{ 0, 1728 \}$, we let
\begin{align}\label{eqn:aj-definition}
a(j) := \frac{4(1728 - j)}{27j} \in \Q.
\end{align}
Finally, in several of our formulas we will need the constant
\begin{align}\label{eqn:cj-constant-definition}
c(j;H_{\alpha,\beta}) := \min \left\{\frac{|a(j)|^{1 / 2}}{N_{a(j)} \alpha^{1 / 6}}, \frac{|a(j)|^{1 / 3}}{N_{a(j)} \beta^{1 / 6}}\right\}.
\end{align}
When considering the calibrated and the uncalibrated naive height we will use the notations $c(j;H^{\mathrm{cal}}):=c(j;H_{4,27})$ and $c(j; H^{\mathrm{ncal}}):=c(j; H_{1,1})$, respectively.

We finally remark that in the formulas appearing in the theorems of this paper, the quantity $\zeta(s)$ denotes the value of the Riemann Zeta function
$$
\zeta(s) := \sum_{n = 1}^{\infty} \frac{1}{n^s}
$$
at $s > 1$.

\subsection{Statements of main results}

In this section, we state our exact and asymptotic formulas for the numbers of elliptic curves in the different families considered, when ordered by the generalized naive height $H_{\alpha, \beta}$ for any $\alpha, \beta \in \R_{\geq 1}$.

We have used \texttt{SageMath} \cite{SageMathCoCalc} to verify all the exact formulas stated in the theorems of this paper by comparing them with results obtained through exhaustive computer searches for each of the families considered. In all cases, the computed values agree precisely with those predicted by our formulas. Similarly, we have checked the accuracy of the asymptotic formulas by comparing the predicted approximations with the exact values. The corresponding code is available in the \texttt{GitHub} repository \cite{BSMM25}. 

\subsubsection{Global counting formulas}

We begin with exact and asymptotic formulas for the number of curves in the global families 
\( \widetilde{\mathcal{E}} \) and \( \mathcal{E} \) ordered by naive height. 
The following theorem is stated for the generalized naive height \(H_{\alpha,\beta}\), 
and in particular it recovers previously known results for the calibrated and 
uncalibrated heights.

The case of the uncalibrated naive height \(H^{\mathrm{ncal}}\) was obtained by 
Brumer~\cite[Lemma 4.3]{Bru92}, who proved an asymptotic formula for 
\(\#\mathcal{E}(X;H^{\mathrm{ncal}})\).
A corresponding result for the calibrated naive height \(H^{\mathrm{cal}}\) 
was stated by Poonen in~\cite[Proposition 2.1]{Poo18}. His statement differs slightly 
in that the error term is given as \(o(X^{5/6})\).
A detailed proof of this calibrated case appears in~\cite[Theorem 2.1]{BSCM25}. 
Formula \eqref{eqn:E-global-formula-generalized-intro} in Theorem \ref{thm:global-counts-generalized-intro} unifies these results and extends them to the generalized height 
\(H_{\alpha,\beta}\).

\begin{theorem}\label{thm:global-counts-generalized-intro}
For every $X > 1$, the numbers of elliptic curves in the families $\mathcal{E}(X; H_{\alpha, \beta})$ and $\widetilde{\mathcal{E}}(X; H_{\alpha, \beta})$ satisfy the formulas
\begin{align}\label{eqn:E-global-formula-generalized-intro}
\#\mathcal{E}(X; H_{\alpha, \beta}) = \frac{4}{\alpha^{1/3} \beta^{1/2} \zeta(10)} X^{5/6}+O(X^{1/2})
\end{align}
and
\begin{align}\label{eqn:widetilde-E-global-formula-generalized}
\#\widetilde{\mathcal{E}}(X; H_{\alpha, \beta}) &= \left( 2 \left\lfloor \frac{X^{1/3}}{\alpha^{1/3}} \right\rfloor + 1 \right) \left( 2 \left\lfloor \frac{X^{1/2}}{\beta^{1/2}} \right\rfloor + 1 \right) - 2 \left\lfloor c(\alpha, \beta) X^{1/6} \right\rfloor - 1 \\
&= \frac{4}{\alpha^{1/3} \beta^{1/2}}X^{5/6}+O(X^{1/2}),
\end{align}
where 
\begin{align}\label{eqn:c-alpha-beta-constant-definition}
c(\alpha, \beta) := \min{ \left\{  \frac{1}{3^{1/2} \alpha^{1/6}}, \frac{1}{2^{1/3} \beta^{1/6}} \right\} }.
\end{align}
\end{theorem}

\begin{remark}
In \cite[p.~352]{Bal16}, Balakrishnan et al.\ state that their project produced an exhaustive database of isomorphism classes of elliptic curves \( E_{A, B} \) with (calibrated) naive height up to \( 2.7 \cdot 10^{10} \), yielding a total of \num{238764310} curves. In our notation, this means that they computed the exact value \( \#\mathcal{E}(2.7 \cdot 10^{10}; H^{\mathrm{cal}}) = \num{238764310} \). 

Choosing $(\alpha, \beta) = (4, 27)$, the approximation given by the main term in equation~\eqref{eqn:E-global-formula-generalized-intro} yields the value
\[
\#\mathcal{E}(2.7 \cdot 10^{10}; H^{\mathrm{cal}}) \approx \frac{2^{4/3}}{3^{3/2} \zeta(10)} (2.7 \cdot 10^{10})^{5/6} \approx \num{238815691.23},
\]
which has a relative error of only about \( 0.02\% \).
\end{remark}

\subsubsection{Counting formulas for curves with fixed $j$-invariant}

The next result gives exact and asymptotic formulas for the number of curves with a fixed \( j \)-invariant. Asymptotic estimates for \( \# \mathcal{E}_j(X; H^{\mathrm{cal}}) \) were previously given by the first author and Calvo-Monge in \cite[Theorem 1.6]{BSCM25}. In particular, for \( j \in \Q \smallsetminus \{0, 1728\} \), we improve upon those results by obtaining an explicit main term in the asymptotics. In these cases, Theorem 1.6 of \cite{BSCM25} only establishes the upper bound \( \# \mathcal{E}_j(X; H^{\mathrm{cal}}) = O(X^{1/6}) \).

\begin{theorem}\label{thm:fixed-j-theorem-generalized}
Let $j \in \Q$. Then for every $X > 1$, the numbers of elliptic curves in the families $\mathcal{E}_j(X;H_{\alpha,\beta})$ and $\widetilde{\mathcal{E}}_j(X;H_{\alpha,\beta})$ satisfy the following formulas.

\noindent For the family $\mathcal{E}_j(X; H_{\alpha, \beta})$ we have
\begin{align}\label{eqn:asymptotics-calibrated-naive-height}
\# \mathcal{E}_j(X; H_{\alpha, \beta}) = 
\begin{cases}
\dfrac{2}{\beta^{1/2}\zeta(6)}X^{1/2}+O(X^{1/12}) &\text{for $j = 0$,}\vspace{5pt}\\
\dfrac{2}{\alpha^{1/3}\zeta(4)}X^{1/3}+O(X^{1/12}) &\text{for $j = 1728$,} \vspace{5pt}\\
\dfrac{2c(j; H_{\alpha, \beta})}{\zeta(2)} X^{1/6} + O(X^{1/12}) &\text{for $j \neq 0, 1728$.}
\end{cases}
\end{align}

\noindent For the family $\widetilde{\mathcal{E}}_j(X; H_{\alpha, \beta})$ we have
\begin{align}\label{eqn:exact-formulas-calibrated-naive-height}
\# \widetilde{\mathcal{E}}_j(X; H_{\alpha, \beta}) = 
\begin{cases}
2\left\lfloor\dfrac{X^{1/2}}{\beta^{1/2}}\right\rfloor &\text{for $j = 0$,}\vspace{5pt}\\
2\left\lfloor\dfrac{X^{1/3}}{\alpha^{1/3}}\right\rfloor &\text{for $j = 1728$,} \vspace{5pt}\\
2\left\lfloor c(j; H_{\alpha, \beta}) X^{1/6}\right\rfloor &\text{for $j \neq 0, 1728$.}
\end{cases}
\end{align}
Here $c(j; H_{\alpha, \beta})$ is the explicit constant defined in equation \eqref{eqn:cj-constant-definition}.
\end{theorem}

\subsubsection{Counting formulas for CM elliptic curves}

As an application of the formulas for $\#\mathcal{E}_j(X;H_{\alpha,\beta})$ in Theorem~\ref{thm:fixed-j-theorem-generalized}, we obtain exact and asymptotic formulas for the number of CM elliptic curves ordered by the generalized naive height $H_{\alpha,\beta}$. 

In the special case of the calibrated naive height $H^{\mathrm{cal}}$, an asymptotic formula for
$\#\mathcal{E}^{\mathrm{cm}}(X;H^{\mathrm{cal}})$
was previously obtained in \cite[Theorem 1.3]{BSCM25}. 
Our result recovers this case and refines it by incorporating the improved asymptotics for $\#\mathcal{E}_j(X;H_{\alpha,\beta})$, which allow us to obtain a tertiary term in the expansion of $\#\mathcal{E}^{\mathrm{cm}}(X;H_{\alpha,\beta})$.

\begin{theorem}\label{thm:CM-theorem-generalized}
For every $X > 1$, the numbers of CM elliptic curves in the families $\mathcal{E}^{\mathrm{cm}}(X;H_{\alpha,\beta})$ and $\widetilde{\mathcal{E}}^{\mathrm{cm}}(X;H_{\alpha,\beta})$ satisfy the formulas
\begin{align}\label{eqn:Ecm-asymptotics}
\#\mathcal{E}^{\mathrm{cm}}(X;H_{\alpha,\beta}) = \frac{2}{\beta^{1/2} \zeta(6)} X^{1/2} + \frac{2}{ \alpha^{1/3}\zeta(4)} X^{1/3} +\frac{2}{\zeta(2)}K(\alpha,\beta)X^{1/6} + O(X^{1/12})
\end{align}
and
\begin{align}\label{eqn:Ecm-hat-asymptotics}
\#\widetilde{\mathcal{E}}^{\mathrm{cm}}(X;H_{\alpha,\beta}) = 2\left\lfloor\frac{X^{1/2}}{\beta^{1/2}}\right\rfloor+ 2\left\lfloor\frac{X^{1/3}}{\alpha^{1/3}}\right\rfloor  + \sum_{\substack{j \in \mathcal{J}^{\mathrm{cm}}\\ j \neq 0, 1728}} 2\left\lfloor c(j;H_{\alpha,\beta})X^{1/6} \right\rfloor,  
\end{align}
where
\begin{align}
K(\alpha,\beta) := \sum_{\substack{j \in\mathcal{J}^{\mathrm{cm}} \\ j \neq 0, 1728}}c(j;H_{\alpha,\beta}).
\end{align}
Here $c(j; H_{\alpha, \beta})$ is the constant defined in equation \eqref{eqn:cj-constant-definition}.
\end{theorem}

\begin{remark}
In \cite[\S 3.4]{Bal16}, Balakrishnan et al.\ state that there are \num{65732} isomorphism classes of elliptic curves \( E_{A, B} \) with complex multiplication having naive height up to \( 2.7 \cdot 10^{10} \). In our notation, this means that they computed the exact value \( \#\mathcal{E}^{\mathrm{cm}}(2.7 \cdot 10^{10}; H^{\mathrm{cal}}) = \num{65732} \). 

The approximation given by the first three terms in equation \eqref{eqn:Ecm-asymptotics} yields
\begin{align*}
\#\mathcal{E}^{\mathrm{cm}}(2.7 \cdot 10^{10}; H^{\mathrm{cal}}) &\approx \frac{2}{3^{3/2} \zeta(6)} (2.7 \cdot 10^{10})^{1/2} + \frac{2^{1/3}}{\zeta(4)} (2.7 \cdot 10^{10})^{1/3} +\frac{2C}{\zeta(2)}(2.7 \cdot 10^{10})^{1/6} \\
&\approx \num{65722.95},
\end{align*}
which corresponds to a relative error of only about \( 0.014\% \).
To further illustrate the improved results on the asymptotic estimates for $\# \mathcal{E}_j(X; H^{\mathrm{cal}})$, Table \ref{table:CM-exact-vs-asymptotic} presents the number of curves $\#\mathcal{E}^{\mathrm{cm}}(X; H^{\mathrm{cal}})$, with their corresponding approximation and relative error, for various values of $X$, including the value calculated by  \cite[p.~352]{Bal16}.
\begin{table}[H]
{\tabulinesep=1.2mm
\begin{tabu}{c c c c} \hline
$X$&$\#\mathcal{E}^{\mathrm{cm}}(X; H^{\mathrm{cal}})$& $\frac{2}{3^{3/2} \zeta(6)} X^{1/2} + \frac{2^{1/3}}{\zeta(4)} X^{1/3} +\frac{2C}{\zeta(2)}X^{1/6} $& Relative error\\ \hline
10&2& 5.40&$170\%$\\ 
$10^2$&6&11.68&$95\%$\\ 
$10^3$&24&27.26&$13\%$\\ 
$10^4$&66&68.28&$3.45\%$\\ 
$10^5$&180&181.55&$0.86\%$\\ 
$10^6$&508&506.31&$0.33\%$\\ 
$10^7$&1470&1464.17&$0.40\%$\\ 
$2.7\cdot 10^{10}$ &65 732& 65 722.95&$0.014\%$\\ \hline

\end{tabu}}
\caption{The numbers of curves $\#\mathcal{E}^{\mathrm{cm}}(X;H^{\mathrm{cal}})$ and the asymptotic approximation given by Theorem \ref{thm:CM-theorem-generalized} for the calibrated naive height, with their respective error.}
\label{table:CM-exact-vs-asymptotic}
\end{table}
\end{remark}

\begin{remark}
The error terms $O(X^{1/12})$ appearing in the asymptotic formulas from Theorems \ref{thm:fixed-j-theorem-generalized} and \ref{thm:CM-theorem-generalized} can be improved significantly under the assumption of the Riemann Hypothesis. The reason for this is that in our proofs we use an asymptotic formula for the number $Q_k(x)$ of $k$-free integers in the interval $[1, x]$, when approximated by the form
$$
Q_k(x) = \frac{x}{\zeta(k)} + E_k(x),
$$
where $E_k(x)$ is the absolute error. In Proposition \ref{prop: k-free-numbers-asymptotics}, we cite the unconditional bound
$$
E_k(x)=O(x^{1/k}).
$$
However, in this paper, we only use the cases for $k = 2, 4$ and $6$, and under the Riemann Hypothesis, the best known bounds for the error $E_k(x)$ for these values of $k$ are
\begin{align*}
E_2(x)=O(x^{11/35 +\varepsilon}),\quad
E_4(x)=O(x^{17/94+\varepsilon})\quad\text{ and }\quad
E_6(x)=O(x^{4/31+\varepsilon}),
\end{align*}
for any $\varepsilon > 0$. These are due to Liu \cite{Liu16, Liu14} for $k = 2, 6$ and to Baker and Powell \cite{BP10} for $k=4$ (see the introduction of \cite{MOT21} for a recent survey on these results). Table \ref{table:conditional-error-bounds} shows the improvements to the error terms if these conditional bounds are used in our proofs. Note that $\frac{1}{12} = 0.08\overline{3}$.

\begin{table}[H]
\centering
{\tabulinesep=1.2mm
\begin{tabu}{c|c|c}
\hline
Family of curves & Improved conditional error bound & Numerical value \\
\hline
$\# \mathcal{E}_0(X; H)$ & $O(X^{2/31 + \varepsilon})$ & $\frac{2}{31} \approx 0.0645161$ \\
$\# \mathcal{E}_{1728}(X; H)$ & $O(X^{17/282 + \varepsilon})$ & $\frac{17}{282} \approx 0.0602837$ \\
$\# \mathcal{E}_{j}(X; H)$ for $j \neq 0, 1728$ & $O(X^{11/210 + \varepsilon})$ & $\frac{11}{210} \approx 0.0523810$ \\
$\#\mathcal{E}^{\mathrm{cm}}(X; H)$ & $O(X^{2/31 + \varepsilon})$ & $\frac{2}{31} \approx 0.0645161$ \\
\hline
\end{tabu}}
\caption{Improved conditional error bounds for different families of curves. Here $H$ stands for any normalization of the naive height considered in this paper.}
\label{table:conditional-error-bounds}
\end{table}

\end{remark}

For ease of comparison, Table~\ref{table:main-terms-calibrated-uncalibrated}
summarizes the main asymptotic terms appearing in the preceding theorems for
the calibrated and uncalibrated naive heights. The table makes explicit that
the two normalizations lead to the same powers of $X$, but with different
height-dependent constants.

\begin{table}[H]
\centering
\renewcommand{\arraystretch}{1.6}
\begin{tabular}{c c c}
\toprule
\textbf{Family} 
&
\textbf{$H=H^{\mathrm{cal}}$}
&
\textbf{$H=H^{\mathrm{ncal}}$}
\\
\midrule

$\mathcal{E}(X;H)$
&
$\displaystyle \frac{2^{4/3}}{3^{3/2}\zeta(10)}X^{5/6}$
&
$\displaystyle \frac{4}{\zeta(10)}X^{5/6}$
\\[8pt]

$\mathcal{E}_0(X;H)$
&
$\displaystyle \frac{2}{3^{3/2}\zeta(6)}X^{1/2}$
&
$\displaystyle \frac{2}{\zeta(6)}X^{1/2}$
\\[8pt]

$\mathcal{E}_{1728}(X;H)$
&
$\displaystyle \frac{2^{1/3}}{\zeta(4)}X^{1/3}$
&
$\displaystyle \frac{2}{\zeta(4)}X^{1/3}$
\\[8pt]

$\mathcal{E}_j(X;H)$, $j\neq 0,1728$
&
$\displaystyle \frac{2c(j;H^{\mathrm{cal}})}{\zeta(2)}X^{1/6}$
&
$\displaystyle \frac{2c(j;H^{\mathrm{ncal}})}{\zeta(2)}X^{1/6}$
\\[8pt]

$\mathcal{E}^{\mathrm{cm}}(X;H)$
&
$\displaystyle 
\frac{2}{3^{3/2}\zeta(6)}X^{1/2}
+
\frac{2^{1/3}}{\zeta(4)}X^{1/3}
+
\frac{2C}{\zeta(2)}X^{1/6}$
&
$\displaystyle 
\frac{2}{\zeta(6)}X^{1/2}
+
\frac{2}{\zeta(4)}X^{1/3}
+
\frac{2D}{\zeta(2)}X^{1/6}$
\\

\bottomrule
\end{tabular}
\caption{Main terms in the asymptotic formulas for the calibrated and uncalibrated naive heights. Here $j\neq 0,1728$, and
\[
C=\sum_{\substack{j\in\mathcal{J}^{\mathrm{cm}}\\ j\neq 0,1728}}c(j;H^{\mathrm{cal}}) \approx 0.95058,
\qquad
D=\sum_{\substack{j\in\mathcal{J}^{\mathrm{cm}}\\ j\neq 0,1728}}c(j;H^{\mathrm{ncal}}) \approx 1.20947.
\]
}
\label{table:main-terms-calibrated-uncalibrated}
\end{table}

\subsection{Parametrization of Elliptic Curves with a given \texorpdfstring{$j$}{j}-invariant}
The following theorem provides the parametrization of the families $\widetilde{\mathcal{E}}_j(X;H_{\alpha,\beta})$ and $\mathcal{E}_j(X;H_{\alpha,\beta})$ of elliptic curves in short Weierstrass form with a fixed $j$-invariant, as was mentioned in the introduction. This parametrization gives all the curves as twists of the two curves of minimal naive height with the given $j$-invariant, as clarified in Remark \ref{remark:parametrization-remark}.

\begin{theorem}\label{thm:parametrization-and-exact-count-curves-with-j-introduction}
Let $j \in \mathbb{Q} \smallsetminus \{0,1728\}$. Then the set of all elliptic curves $E_{A, B}: y^2=x^3+Ax+B$ with $A, B \in \mathbb{Z}$ that have $j\left(E_{A, B}\right)=j$ and $H_{\alpha, \beta}\left(E_{A, B}\right) \leq X$ can be explicitly described parametrically by
\begin{align}\label{eqn:elliptic-j-inv-parametrization-introduction}
\widetilde{\mathcal{E}}_j\left(X ; H_{\alpha, \beta}\right) = \left\{ E_{A(j, m), B(j, m)} \suchthat \text {$m \in \mathbb{Z} \smallsetminus \{  0\}$ and $|m| \leq c(j; H_{\alpha, \beta}) X^{1/6}$} \right\}
\end{align}
where $A(j, m)$ and $B(j, m)$ are the integers defined by
\begin{align}
A(j,m)=\frac{N_{a(j)}^2 m^2}{a(j)}\quad \text{and} \quad  B(j,m)=\frac{N_{a(j)}^3 m^3}{a(j)}
\end{align}
and the constant $c(j; H_{\alpha, \beta})$ is defined by
\begin{align}
c(j; H_{\alpha, \beta}) := \min \left\{\frac{|a(j)|^{1 / 2}}{N_{a(j)} \alpha^{1 / 6}}, \frac{|a(j)|^{1 / 3}}{N_{a(j)} \beta^{1 / 6}}\right\}.
\end{align}
This implies that
\begin{align}\label{eqn:exact-j-count-generalized-height-formula-introduction}
\# \widetilde{\mathcal{E}}_j\left(X ; H_{\alpha, \beta}\right)=2\left \lfloor c(j; H_{\alpha, \beta}) X^{1 / 6}\right \rfloor.
\end{align}
Moreover, for any $m \in \Z \smallsetminus \{ 0 \}$ and any prime $p \in \mathbb{P}$ we have that 
\[
2\ord_p(m) \leq \ord_p{(A(j, m))} \leq 2\ord_p(m)+1 \quad \text{if $\ord_p{(a(j))} \geq 0$}
\]  
and 
\[
3\ord_p(m) \leq \ord_p{(B(j, m))} \leq 3\ord_p(m)+2 \quad \text{if $\ord_p{(a(j))} < 0$}.
\] 
In particular, this implies that the curve $E_{A(j,m),B(j,m)}\in\mathcal{E}_j(X;H_{\alpha,\beta})$ if and only if $m$ is square-free and $|m| \leq c(j; H_{\alpha, \beta}) X^{1/6}$, in other words, we have
\begin{align}\label{eqn:Q-isomorphism-classes-elliptic-j-inv-parametrization-introduction}
\mathcal{E}_j\left(X ; H_{\alpha, \beta}\right) = \left\{ E_{A(j, m), B(j, m)} \suchthat \text {$m \in \mathbb{Z} \smallsetminus \{  0\}$ is square-free and $|m| \leq c(j; H_{\alpha, \beta}) X^{1/6}$} \right\}.
\end{align}
\end{theorem}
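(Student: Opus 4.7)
The plan is to reduce the $j$-invariant condition to a single Diophantine equation $B^2 = a(j) A^3$, show that its integer solutions are parametrized by $m \in \Z \smallsetminus \{0\}$ via the explicit formulas in the theorem, and then translate the height bound and the minimality condition for $\mathcal{E}_j$ into conditions on $m$. Starting from $j(E_{A,B}) = 1728 \cdot 4A^3 / (4A^3 + 27B^2)$, the assumption $j(E_{A,B}) = j$ with $j \neq 0, 1728$ is equivalent, after clearing denominators, to
\[
B^2 = a(j)\, A^3, \qquad a(j) = \frac{4(1728 - j)}{27j} \in \Q^\times,
\]
and forces $A, B \in \Z \smallsetminus \{0\}$ (otherwise $j$ would lie in $\{0, 1728\}$).

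The heart of the argument is to establish that the map $m \mapsto (A(j,m), B(j,m))$ is a bijection between $\Z \smallsetminus \{0\}$ and the set of integer solutions of $B^2 = a(j) A^3$, with inverse $(A, B) \mapsto m := B/(N_{a(j)} A)$. The work lies in the integrality statements: that $A(j,m), B(j,m) \in \Z$ for every nonzero integer $m$, and that $B/(N_{a(j)} A) \in \Z$ for every integer solution. Both are verified prime-by-prime using the constraint $2\operatorname{ord}_p(B) = \operatorname{ord}_p(a(j)) + 3\operatorname{ord}_p(A)$ and the definition of $\alpha_p(a(j))$, splitting into the cases $\operatorname{ord}_p(a(j)) \geq 0$ (with sub-cases by the parity of $\operatorname{ord}_p(a(j))$) and $\operatorname{ord}_p(a(j)) < 0$ (with sub-cases by the residue of $-\operatorname{ord}_p(a(j))$ modulo $3$). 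The round-trip identities $A(j,m) = N_{a(j)}^2 m^2/a(j)$ and $B(j,m) = N_{a(j)} A(j,m)\, m$ then make the mutual inversion a direct check.

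Given the parametrization, everything else is algebra. From $|A(j,m)|^3 = N_{a(j)}^6 |m|^6/|a(j)|^3$ and $|B(j,m)|^2 = N_{a(j)}^6 |m|^6/|a(j)|^2$, the bound $H_{\alpha,\beta} \leq X$ is equivalent to $|m| \leq c(j; H_{\alpha,\beta}) X^{1/6}$, giving the parametric description \eqref{eqn:elliptic-j-inv-parametrization-introduction}, and, since $\#\{m \in \Z \smallsetminus \{0\} : |m| \leq T\} = 2\lfloor T\rfloor$, the exact count \eqref{eqn:exact-j-count-generalized-height-formula-introduction}. The stated bounds on $\operatorname{ord}_p(A(j,m))$ and $\operatorname{ord}_p(B(j,m))$ follow by simplifying the identities $\operatorname{ord}_p(A(j,m)) = 2\alpha_p(a(j)) + 2\operatorname{ord}_p(m) - \operatorname{ord}_p(a(j))$ and $\operatorname{ord}_p(B(j,m)) = 3\alpha_p(a(j)) + 3\operatorname{ord}_p(m) - \operatorname{ord}_p(a(j))$ together with the elementary facts $2\lceil n/2\rceil - n \in \{0,1\}$ and $3\lceil -s/3\rceil + s \in \{0,1,2\}$. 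These bounds then yield the square-free criterion: if $\operatorname{ord}_p(m) \geq 2$ for some $p$ then $p^4 \mid A(j,m)$ and $p^6 \mid B(j,m)$; while if $m$ is square-free then for each $p$ either $\operatorname{ord}_p(A(j,m)) \leq 3$ (when $\operatorname{ord}_p(a(j)) \geq 0$) or $\operatorname{ord}_p(B(j,m)) \leq 5$ (when $\operatorname{ord}_p(a(j)) < 0$), so no prime simultaneously satisfies both divisibility conditions. The main technical obstacle is the prime-by-prime verification in the bijection step, which requires tracking several sub-cases; everything else reduces to a direct computation.
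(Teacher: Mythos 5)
Your proposal is correct and follows essentially the same route as the paper: the paper simply packages your bijection step as a standalone parametrization of the integral points on the cuspidal cubic $\mathcal{C}_{a(j)}\colon y^2 = a(j)x^3$ (Proposition \ref{prop:cuspidal-cubic-exact-count}, the map $t \mapsto (t^2/a, t^3/a)$ on $N_{a(j)}\mathbb{Z}$ with inverse $t = B/A = N_{a(j)}m$, verified prime by prime), combined with Lemma \ref{lemma:no-singular-elliptic-curve-j-inv} identifying the condition $j(E_{A,B}) = j$ with $(A,B)$ lying on $\mathcal{C}_{a(j)}$ and automatically being nonsingular since $a(j) \neq -4/27$. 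The translation of the height bound into $|m| \leq c(j;H_{\alpha,\beta})X^{1/6}$, the count $2\lfloor \cdot \rfloor$, the valuation inequalities via $2\lceil n/2\rceil - n \in \{0,1\}$ and the analogous mod $3$ fact, and the square-free criterion all match the paper's argument.
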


\begin{remark}\label{remark:parametrization-remark}
It is worth mentioning that in the literature there are explicit equations for curves in short Weierstrass form having a given $j$-invariant. For instance, if $j \in \Q \smallsetminus \{ 0, 1728 \}$, then the curve 
$$
E \colon y^2 = x^3 + \frac{3j}{1728 - j} x + \frac{2j}{1728 - j}
$$
has $j$-invariant $j(E) = j$ (see e.g. \cite[p. 47]{Was08}). However, for most values of $j$, the coefficients of $E$ are in $\Q \smallsetminus \Z$. Thus, our parametrization has the advantage that the coefficients of the curves are integers and moreover, for any given $j$, the curves of minimal naive height $H_{\alpha, \beta}$ correspond to the values $m = \pm 1$. Hence, the curves in our parametrization can be thought of as the quadratic twists of the curves of minimal naive height for a fixed value of $j$. 
\end{remark}

\subsection{Asymptotic density of the representatives for the \texorpdfstring{$\Q$}{Q}-isomorphism classes of elliptic curves over \texorpdfstring{$\Q$}{Q}}

The counting formulas established in this paper allow us to determine the asymptotic density of $ \mathbb{Q} $-isomorphism classes of elliptic curves over \( \mathbb{Q} \) within the families considered, when the curves are ordered by the generalized naive height \( H_{\alpha, \beta} \).

\begin{theorem}\label{thm:distribution-reps-all-curves}
As $X\to\infty$, the density of the representatives for the $\Q$-isomorphism classes of elliptic curves over $\Q$ is given by
\begin{align}
\frac{\# \mathcal{E}(X; H_{\alpha,\beta})}{\# \widetilde{\mathcal{E}}(X; H_{\alpha,\beta})} =
\frac{1}{\zeta(10)} + O(X^{-1/3}).
\end{align}
\end{theorem}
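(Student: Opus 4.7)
The approach is essentially routine division: both the numerator and the denominator admit asymptotic expansions with leading term proportional to $X^{5/6}$, and the constants differ exactly by a factor of $1/\zeta(10)$. I would begin by invoking the generalized-height versions of Theorems \ref{thm:global-counts-calibrated} and \ref{thm:global-counts-uncalibrated} (which the authors indicate they prove for arbitrary $\alpha, \beta \in \mathbb{R}_{>0}$), namely
\[
\#\mathcal{E}(X; H_{\alpha,\beta}) = \frac{C_{\alpha,\beta}}{\zeta(10)} X^{5/6} + O\!\left(\frac{X^{7/12}}{\log X}\right), \qquad \#\widetilde{\mathcal{E}}(X; H_{\alpha,\beta}) = C_{\alpha,\beta}\, X^{5/6} + O(X^{1/2}),
\]
where $C_{\alpha,\beta} > 0$ specializes to $2^{4/3}/3^{3/2}$ when $(\alpha,\beta) = (4,27)$ and to $4$ when $(\alpha,\beta) = (1,1)$.

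Next I would divide numerator and denominator by $X^{5/6}$, obtaining
\[
\frac{\#\mathcal{E}(X; H_{\alpha,\beta})}{\#\widetilde{\mathcal{E}}(X; H_{\alpha,\beta})} = \frac{\dfrac{C_{\alpha,\beta}}{\zeta(10)} + O\!\left(X^{-1/4}/\log X\right)}{C_{\alpha,\beta} + O(X^{-1/3})}.
\]
The denominator factors as $C_{\alpha,\beta}\bigl(1 + O(X^{-1/3})\bigr)$, and applying the geometric-series expansion $(1+u)^{-1} = 1 - u + O(u^2)$ with $u = O(X^{-1/3})$ gives a multiplicative correction of the form $1 + O(X^{-1/3})$. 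Distributing this over the numerator produces a main term of $1/\zeta(10)$ together with error contributions of size $O(X^{-1/3})$ (from the denominator expansion acting on the main term) and $O(X^{-1/4}/\log X)$ (from the numerator error, after division by $C_{\alpha,\beta}$).

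Finally, since $X^{-1/3} = o(X^{-1/4}/\log X)$ as $X \to \infty$, the $O(X^{-1/3})$ contribution is absorbed into the $O(X^{-1/4}/\log X)$ term, yielding the claimed estimate. There is no real obstacle here: the proof is essentially arithmetic bookkeeping of error terms, and the only prerequisite beyond the stated theorems is ensuring that the generalized-height analogues of Theorems \ref{thm:global-counts-calibrated} and \ref{thm:global-counts-uncalibrated} indeed have the same shape, which is guaranteed by the authors' remark that their proofs are carried out at the level of $H_{\alpha,\beta}$.
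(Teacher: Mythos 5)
Your proposal is correct and follows essentially the same route as the paper: the authors invoke their generalized-height result (Theorem \ref{thm:global-counts-generalized}, with constant $4/(\alpha^{1/3}\beta^{1/2})$, matching your $C_{\alpha,\beta}$), divide through by the main term, and expand $\bigl(1+O(X^{-1/3})\bigr)^{-1}$ via a small lemma equivalent to your geometric-series step, absorbing the $O(X^{-1/3})$ error into $O\bigl(X^{-1/4}/\log X\bigr)$ exactly as you do.
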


\begin{remark}
Observe that $\dfrac{1}{\zeta(10)}\approx 99.9\%$, so that about $99.9\%$ of the curves in the family $\widetilde{\mathcal{E}}$ consist of representatives for the $\Q$-isomorphism classes of elliptic curves over $\Q$.
\end{remark}

The next theorem shows that when $j \in \Q \smallsetminus \{ 0, 1728 \}$, all the classes are equidistributed.

\begin{theorem}\label{thm:distribution-reps-fixed-j}
Let $j \in \Q$. Then, as $X \to \infty$, the density of the representatives for the $\Q$-isomorphism classes of elliptic curves over $\Q$ with fixed $j$-invariant is given by
\begin{align}
\frac{\# \mathcal{E}_j(X; H_{\alpha,\beta})}{\# \widetilde{\mathcal{E}}_j(X; H_{\alpha,\beta})} =
\begin{cases}
\dfrac{1}{\zeta(6)} + O(X^{-5/12}) &\text{for $j = 0$,}\vspace{5pt}\\
\dfrac{1}{\zeta(4)} + O(X^{-1/4}) &\text{for $j = 1728$,} \vspace{5pt}\\
\dfrac{1}{\zeta(2)} + O(X^{-1/12}) &\text{for $j \neq 0, 1728$.}
\end{cases}
\end{align}
\end{theorem}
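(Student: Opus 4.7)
The plan is to compute the ratio in the three cases $j = 0$, $j = 1728$, and $j \in \Q \smallsetminus \{0, 1728\}$ by dividing an asymptotic expansion for the numerator $\# \mathcal{E}_j(X; H_{\alpha, \beta})$ by an exact formula for the denominator $\# \widetilde{\mathcal{E}}_j(X; H_{\alpha, \beta})$. Both ingredients are already provided, in their generalized-height formulations, by Theorems \ref{thm:fixed-j-theorem-calibrated}, \ref{thm:fixed-j-theorem-uncalibrated} and \ref{thm:parametrization-and-exact-count-curves-with-j-introduction}, so the argument is essentially a bookkeeping exercise. The conceptual point is that in each case, the condition cutting $\mathcal{E}_j$ out of $\widetilde{\mathcal{E}}_j$ reduces to a $k$-freeness condition on a single integer parameter (with $k = 6, 4, 2$ when $j = 0$, $j = 1728$, or $j \neq 0, 1728$ respectively), from which the density $1/\zeta(k)$ emerges by Proposition \ref{prop: k-free-numbers-asymptotics}.

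For $j \in \Q \smallsetminus \{0, 1728\}$, Theorem \ref{thm:parametrization-and-exact-count-curves-with-j-introduction} parametrizes the full family by nonzero integers $m$ with $|m| \leq y := c(j; H_{\alpha, \beta}) X^{1/6}$, and cuts out $\mathcal{E}_j$ by the additional requirement that $m$ be square-free. Thus the denominator equals $2 \lfloor y \rfloor = 2y + O(1)$ while the numerator equals $2 Q_2(y) = 2y/\zeta(2) + O(y^{1/2}) = 2y/\zeta(2) + O(X^{1/12})$. For $j = 0$, the identity $j(E_{A,B}) = 0$ forces $A = 0$, and the minimality condition defining $\mathcal{E}$ becomes ``$B$ is $6$th-power-free''; with $y := (X/\beta)^{1/2}$, the denominator is $2 \lfloor y \rfloor$ and the numerator is $2 Q_6(y) = 2y/\zeta(6) + O(X^{1/12})$. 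The case $j = 1728$ is strictly analogous, with $B = 0$, the condition ``$A$ is $4$th-power-free'', $y := (X/\alpha)^{1/3}$, and numerator $2 Q_4(y) = 2y/\zeta(4) + O(X^{1/12})$.

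In each case, dividing numerator by denominator and using $\frac{1}{2y + O(1)} = \frac{1}{2y}(1 + O(y^{-1}))$ produces a main term $1/\zeta(k)$ together with an error of order $X^{1/12}/y$. Since $y$ is of order $X^{1/6}$, $X^{1/3}$, and $X^{1/2}$ in the three cases $j \neq 0, 1728$, $j = 1728$, and $j = 0$ respectively, the resulting errors are $O(X^{-1/12})$, $O(X^{-1/4})$, and $O(X^{-5/12})$; reporting uniformly the weakest of these recovers the stated $O(X^{-1/12})$ bound. There is no real obstacle in the argument; the only minor calculational point worth checking is that the $O(y^{1/k})$ error from Proposition \ref{prop: k-free-numbers-asymptotics} rescales to $O(X^{1/12})$ in all three cases, which happens precisely because $(1/k) \cdot (\text{exponent of } y \text{ in } X) = 1/12$ for each of the pairs $(k, y) \in \{(2, X^{1/6}), (4, X^{1/3}), (6, X^{1/2})\}$.
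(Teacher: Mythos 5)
Your proposal is correct and follows essentially the same route as the paper: the paper also obtains the ratio by dividing the asymptotic count of $\#\mathcal{E}_j(X;H_{\alpha,\beta})$ (derived, exactly as you do, from the $6$-free, $4$-free, and square-free conditions via Proposition \ref{prop: k-free-numbers-asymptotics} and the parametrization of Theorem \ref{thm:parametrization-and-exact-count-curves-with-j-introduction}) by the exact floor formula for $\#\widetilde{\mathcal{E}}_j(X;H_{\alpha,\beta})$, expanding the reciprocal of the denominator. Your observation that the error is actually $O(X^{-1/4})$ and $O(X^{-5/12})$ in the cases $j=1728$ and $j=0$, with the stated $O(X^{-1/12})$ being the uniform (weakest) bound, is also accurate.
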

\begin{remark}
Observe that $\dfrac{1}{\zeta(6)} \approx 98.3\%$,  $\dfrac{1}{\zeta(4)} \approx 92.4\%$ and $\dfrac{1}{\zeta(2)} \approx 60.8\%$. Thus, excluding the exceptional cases of $j = 0$ and $j = 1728$, for every other rational $j$-invariant, about $60.8\%$ of the curves in the family $\widetilde{\mathcal{E}}_j$ correspond to representatives for the $\Q$-isomorphism classes of elliptic curves $E/\Q$ with $j(E) = j$.
\end{remark}

\subsection{Organization of the paper}

The paper is organized as follows. In Section~\ref{section: cuspidal cubic count}, we prove a proposition that provides a parametrization of the set of integral points lying on a cuspidal cubic. From this parametrization, we derive an exact formula for the number of such points. This result serves as a key tool in establishing our exact and asymptotic formulas for counting elliptic curves. These formulas are proved in Section~\ref{section:distribution-of-cm-elliptic-curves-over-E}, where, after introducing some preparatory lemmas, we state and prove our main results concerning the generalized naive height \( H_{\alpha, \beta} \). Finally, in Section~\ref{section:explicit-computations}, we present explicit computations and numerical data related to CM elliptic curves, as a computational complement to the theoretical results developed earlier.


\section{Integral Points on Cuspidal Cubics}\label{section: cuspidal cubic count}

The following result provides an exact formula for the number of integral points inside a box in \( \R^2 \) that lie on a cuspidal cubic. This count plays a key role in deriving both exact and asymptotic formulas for the number of elliptic curves in the various families considered in this paper. The proposition also refines Lemma 2.3 of \cite{BSCM25}, which only gives an upper bound.

\begin{proposition}\label{prop:cuspidal-cubic-exact-count}
Let \(a \in \mathbb{Q}^{\times}\) and consider the cuspidal cubic $\mathcal{C}_{a}: y^{2}=a x^{3}$. Moreover, let $\mathcal{C}_{a}(\mathbb{Z})$ be the set of integral points on $\mathcal{C}_{a}$, i.e., the set of points \(\left(x_{0}, y_{0}\right) \in \mathbb{Z}^{2}\) such that \(y_{0}{ }^{2}=a x_{0}{ }^{3}\) and similarly, for $T_1 > 0$ and $T_2 > 0$, let
\[
\mathcal{C}_{a}(\mathbb{Z} ; T_1, T_2):=\left\{\left(x_{0}, y_{0}\right) \in \mathcal{C}_{a}(\mathbb{Z}) \suchthat \text{$|x_{0}| \leq T_1$ and $|y_0| \leq T_2$ }\right\}.
\]
Now, we let
\[
a=(-1)^{\varepsilon} \prod_{p \in \mathbb{P}} p^{\operatorname{ord}_{p}(a)}
\]
be the prime factorization of $a$. Also, for each \(p \in \mathbb{P}\), define the integer
\[
\alpha_{p}(a):= \begin{cases} 
\left \lceil\dfrac{\operatorname{ord}_{p}(a)}{2} \right \rceil & \text{if $\operatorname{ord}_{p}(a) \geq 0$,} \vspace{5pt}\\ \left \lceil\dfrac{\operatorname{ord}_{p}(a)}{3} \right \rceil & \text{if $\operatorname{ord}_{p}(a) < 0$.}
\end{cases}
\]
Finally, let $\displaystyle{N_{a}:=\prod \limits_{p \in \mathbb{P}} p^{\alpha_{p}(a)} \in \mathbb{Q}^{\times}}$. Then there is a bijection

\[
\begin{aligned}
\varphi_{a} : N_{a} \mathbb{Z} & \longrightarrow \mathcal{C}_{a}(\mathbb{Z}) \\
t & \longmapsto\left(\frac{t^{2}}{a}, \frac{t^{3}}{a}\right)
\end{aligned}
\]
and this bijection implies that for $T_1 > 0$ and $T_2 > 0$ we have
\[
\mathcal{C}_a(\Z;T_1,T_2)=\left\{\left(\frac{N^2_am^2}{a},\frac{N^3_am^3}{a}\right)\suchthat m\in\Z, |m|\leq \min{\left\{\frac{|a|^{1/2}}{N_a}T^{1/2}_1, \frac{|a|^{1/3}}{N_a}T^{1/3}_2\right\}}\right\}
\]
and therefore
\[
\# \mathcal{C}_{a}(\mathbb{Z}; T_1, T_2) = 2\left\lfloor \min{ \left \{ \frac{|a|^{1/2}}{N_{a}} T_1^{1/2}, \frac{|a|^{1/3}}{N_{a}} T_2^{1/3} \right\}} \right\rfloor+1. 
\]

\end{proposition}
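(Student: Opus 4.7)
The plan is to establish that $\varphi_a$ is a bijection between $N_a\Z$ and $\mathcal{C}_a(\Z)$, and then translate the box conditions $|x_0|\leq T_1$ and $|y_0|\leq T_2$ into a single bound on the integer parameter $m$.

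First, to confirm that $\varphi_a$ is well-defined, I would write $t=N_am$ with $m\in\Z$ and check, for every prime $p$, that $\operatorname{ord}_p(t^2/a)\geq 0$ and $\operatorname{ord}_p(t^3/a)\geq 0$. Since $\operatorname{ord}_p(t^2/a)=2\alpha_p(a)-\operatorname{ord}_p(a)+2\operatorname{ord}_p(m)$ and similarly for $t^3/a$, both nonnegativity conditions reduce to the inequalities $2\alpha_p(a)\geq \operatorname{ord}_p(a)$ and $3\alpha_p(a)\geq \operatorname{ord}_p(a)$, which follow immediately from the definition of $\alpha_p(a)$ in each of the cases $\operatorname{ord}_p(a)\geq 0$ and $\operatorname{ord}_p(a)<0$.

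Injectivity is straightforward: if $\varphi_a(t_1)=\varphi_a(t_2)$, then $t_1^2=t_2^2$ and $t_1^3=t_2^3$, which forces $t_1=t_2$. For surjectivity, given $(x_0,y_0)\in\mathcal{C}_a(\Z)$, the case $x_0=0$ forces $y_0=0$ and corresponds to $t=0$. For $x_0\neq 0$, I would set $t:=y_0/x_0\in\Q$; using the curve equation $y_0^2=ax_0^3$ one obtains $t^2=ax_0$ and hence $t^3=t\cdot t^2=ay_0$, so that $\varphi_a(t)=(x_0,y_0)$. The main obstacle is showing $t\in N_a\Z$, i.e., $\operatorname{ord}_p(t)\geq \alpha_p(a)$ for every prime $p$. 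I would handle this by cases: when $\operatorname{ord}_p(a)\geq 0$, apply $2\operatorname{ord}_p(t)=\operatorname{ord}_p(a)+\operatorname{ord}_p(x_0)\geq \operatorname{ord}_p(a)$ (using $x_0\in\Z$) and the integrality of $\operatorname{ord}_p(t)$ to conclude $\operatorname{ord}_p(t)\geq \lceil\operatorname{ord}_p(a)/2\rceil=\alpha_p(a)$; when $\operatorname{ord}_p(a)<0$, apply instead $3\operatorname{ord}_p(t)=\operatorname{ord}_p(a)+\operatorname{ord}_p(y_0)\geq \operatorname{ord}_p(a)$ (using $y_0\in\Z$) to conclude $\operatorname{ord}_p(t)\geq \lceil\operatorname{ord}_p(a)/3\rceil=\alpha_p(a)$. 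The key subtlety is that the inequality $2\operatorname{ord}_p(t)\geq \operatorname{ord}_p(a)$ alone is too weak when $\operatorname{ord}_p(a)$ is very negative, which is precisely why one must switch to the cube relation in that regime.

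With the bijection in hand, the counting formula follows directly. Under $\varphi_a$, the set $\mathcal{C}_a(\Z;T_1,T_2)$ corresponds to the integers $m$ satisfying $|N_a^2m^2/a|\leq T_1$ and $|N_a^3m^3/a|\leq T_2$, which simplify to $|m|\leq |a|^{1/2}T_1^{1/2}/N_a$ and $|m|\leq |a|^{1/3}T_2^{1/3}/N_a$ respectively. Setting $M:=\min\{|a|^{1/2}T_1^{1/2}/N_a,\, |a|^{1/3}T_2^{1/3}/N_a\}$ and counting integers in $[-M,M]$ yields $2\lfloor M\rfloor+1$, which is the stated formula.
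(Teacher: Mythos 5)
Your proposal is correct and follows essentially the same route as the paper: the same $p$-adic valuation case analysis (square relation when $\operatorname{ord}_p(a)\geq 0$, cube relation when $\operatorname{ord}_p(a)<0$) to show $\varphi_a$ lands in and surjects onto $\mathcal{C}_a(\mathbb{Z})$, followed by the same translation of the box conditions into $|m|\leq\min\{|a|^{1/2}T_1^{1/2}/N_a,\,|a|^{1/3}T_2^{1/3}/N_a\}$. The only cosmetic difference is that the paper first sets up the bijection over $\mathbb{R}$ and $\mathbb{Q}$ via projection from the cusp and gets injectivity by restriction, whereas you verify injectivity and surjectivity directly (including the $x_0=0$ case), which is equally valid.
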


\begin{proof}
The map
\begin{align*}
\widetilde{\varphi}_a \colon &\mathbb{R} \longrightarrow \mathcal{C}_a(\mathbb{R})\\
&t \longmapsto\left(\frac{t^2}{a}, \frac{t^3}{a}\right)
\end{align*}
is a bijection, obtained by projecting from the singular point at the origin on the cuspidal cubic $\mathcal{C}_a(\R)$ onto the vertical line $x = 1$. We claim that its restriction to $\mathbb{Q}$, namely the map $\widetilde{\varphi}_a |_{\mathbb{Q}}: \mathbb{Q} \longrightarrow \mathcal{C}_a(\mathbb{Q})$, is also a bijection. Indeed, it is clear that if $t \in \mathbb{Q}$, then $\widetilde{\varphi}_a(t) \in \mathcal{C}_a(\mathbb{Q})$. On the other hand, if $(x, y) \in \mathcal{C}_a(\mathbb{Q})$, then there is a $t \in \mathbb{R}$ such that $\widetilde{\varphi}_a(t) := \left(\frac{t^2}{a}, \frac{t^3}{a}\right)=(x, y) \in \mathbb{Q}^2$. If $(x, y) \neq(0,0)$, then $t \neq 0$ and both $x \neq 0$ and $y \neq 0$. Suppose indeed that $(x, y) \neq (0, 0)$. Hence
$$
t=\frac{\dfrac{t^3}{a}}{\dfrac{t^2}{a}} = \frac{y}{x} \in \mathbb{Q}^{\times}.
$$

We will show that the map
\begin{align*}
\varphi_a \colon &N_a \mathbb{Z} \longrightarrow \mathcal{C}_a(\mathbb{Z})\\
&\quad t \longmapsto\left(\frac{t^2}{a}, \frac{t^3}{a}\right)
\end{align*}
is a bijection. First let $t := N_a m \in N_a \Z$ with $m \in \Z$. Now, let $p \in \mathbb{P}$ be an arbitrary prime number. We will check that $\ord_p{(t^2/a)} \geq 0$ and $\ord_p{(t^3/a)} \geq 0$ and this will imply that $(t^2/a, t^3/a) \in \mathcal{C}_a(\Z)$. Note that
\begin{align}
\operatorname{ord}_p\left(\frac{t^2}{a}\right) &= 2 \operatorname{ord}_p(t)-\operatorname{ord}_p(a) \geq 0 \iff \operatorname{ord}_p(t) \geq \frac{\operatorname{ord}_p(a)}{2}, \label{eqn:order-inequality-1}\\
\operatorname{ord}_p\left(\frac{t^3}{a}\right) &= 3 \operatorname{ord}_p(t)-\operatorname{ord}_p(a) \geq 0 \iff \operatorname{ord}_p(t) \geq \frac{\operatorname{ord}_p(a)}{3}.\label{eqn:order-inequality-2}
\end{align}

Now, there are two cases to consider. First, if $\ord_p(a) \geq 0$ we have $\dfrac{\operatorname{ord}_p(a)}{2} \geq \dfrac{\operatorname{ord}_p(a)}{3}$, hence it is enough to show that $\operatorname{ord}_p(t) \geq \dfrac{\operatorname{ord}_p(a)}{2}$. Then, since $t = N_a m$, we have
\begin{align*}
\ord_p(t) &= \ord_p(N_a m) \\
&= \ord_p(N_a) + \ord_p(m)\\
&= \alpha_p(a) + \ord_p(m)\\
&\geq \alpha_p(a)\\
&= \left\lceil\frac{\operatorname{ord}_p(a)}{2}\right\rceil \\
&\geq \frac{\operatorname{ord}_p(a)}{2}.
\end{align*}
Next, if $\ord_p(a) < 0$, then $\dfrac{\operatorname{ord}_p(a)}{3}>\dfrac{\operatorname{ord}_p(a)}{2}$. Thus it is enough to show that $\operatorname{ord}_p(t) \geq \dfrac{\operatorname{ord}_p(a)}{3}$. Therefore, again since $t = N_a m$, a similar calculation as above shows that
\begin{align*}
\ord_p(t) &\geq \alpha_p(a)\\
&= \left\lceil\frac{\operatorname{ord}_p(a)}{3}\right\rceil \\
&\geq \frac{\operatorname{ord}_p(a)}{3}.
\end{align*}
This proves that $(t^2/a, t^3/a) \in \mathcal{C}_a(\Z)$ and hence $\varphi_a\left(N_a \mathbb{Z}\right) \subseteq \mathcal{C}_a(\mathbb{Z})$. This map is injective since it is the restriction of $\widetilde{\varphi}_a$.

Conversely, suppose that $(x, y) \in \mathcal{C}_a(\mathbb{Z}) \smallsetminus \{(0,0)\}$. Let $t \in \mathbb{Q}^{\times}$ be such that $\varphi_a(t)=(x, y)$.
Then $x=\dfrac{t^2}{a}, y=\dfrac{t^3}{a}$ and then $t=\dfrac{y}{x}$.
We claim that $t \in N_a \mathbb{Z}$; this
means that $\operatorname{ord}_p(t) \geq\operatorname{ord}_p\left(N_a\right)$ for every $p \in \mathbb{P}$. First, if $\ord_p(a) \geq 0$ we have $\ord_p(N_a) := \alpha_p(a) = \left\lceil\dfrac{\operatorname{ord}_p(a)}{2}\right\rceil$. Moreover, since $\operatorname{ord}_p(x) \geq 0$ and $\operatorname{ord}_p(y) \geq 0$, then \eqref{eqn:order-inequality-1} implies that 
$$
\operatorname{ord}_p(t) \geq \frac{\operatorname{ord}_p(a)}{2}
$$
and since $\ord_p(t) \in \Z$, it follows that 
$$
\operatorname{ord}_p(t) \geq\left\lceil\frac{\operatorname{ord}_p(a)}{2}\right\rceil=\operatorname{ord}_p\left(N_a\right).
$$
The case when $\ord_p(a) < 0$ follows from an analogous argument with the inequality \eqref{eqn:order-inequality-2}. This completes the proof that the map $\varphi_a: N_a \mathbb{Z} \longrightarrow \mathcal{C}_a(\mathbb{Z})$ is a bijection.

Now, using this bijection we want to count
\begin{align*}
\# \mathcal{C}_a(\mathbb{Z} ; T_1, T_2) := \left\{(x, y) \in \mathcal{C}_a(\mathbb{Z}) \suchthat |x|  \leq T_1 \text{ and } |y|\leq T_2 \right\}.
\end{align*}
Let $T_1 > 0$ and $T_2 > 0$. Then, the bijection $\varphi_a$ allows us to write
\begin{align*}
\mathcal{C}_a(\mathbb{Z} ; T_1, T_2) & =\left\{ \left(\frac{N^2_am^2}{a},\frac{N^3_am^3}{a}\right) \suchthat m\in\Z, \left| \frac{N^2_am^2}{a}\right| \leq T_1 \text{ and } \left|\frac{N^3_am^3}{a}\right|\leq T_2 \right\} \\
& =\left\{ \left(\frac{N^2_am^2}{a},\frac{N^3_am^3}{a}\right) \suchthat m\in\Z, |m| \leq \frac{|a|^{1/2}}{N_a}T^{1/2}_1 \text{ and } |m|\leq \frac{|a|^{1/3}}{N_a}T^{1/3}_2 \right\}\\
& =\left\{ \left(\frac{N^2_am^2}{a},\frac{N^3_am^3}{a}\right) \suchthat m\in\Z, |m| \leq \min\left\{\frac{|a|^{1/2}}{N_a}T^{1/2}_1, \frac{|a|^{1/3}}{N_a}T^{1/3}_2 \right\}\right\}.
\end{align*}
Finally, this immediately implies that
\[
\# \mathcal{C}_{a}(\mathbb{Z}; T_1, T_2) = 2\left\lfloor \min{ \left \{ \frac{|a|^{1/2}}{N_{a}} T_1^{1/2}, \frac{|a|^{1/3}}{N_{a}} T_2^{1/3} \right\}} \right\rfloor+1.
\]
\end{proof}

\section{Proofs of the theorems stated in Section \ref{section: definitions-and-results}}\label{section:distribution-of-cm-elliptic-curves-over-E}
In this section we carry out the proofs of the main theorems stated in Section \ref{section: definitions-and-results} of the paper. Throughout this section, $\mu(n)$ denotes the value of the Möbius function at an integer $n \geq 1$. We start by proving a couple of lemmas that we will need later.

\begin{lemma}\label{lemma:moebius-sum-lemma}
Let $s > 1$ and $x \geq 1$. Then
\begin{align}
\sum_{1 \leq n \leq x} \frac{\mu(n)}{n^s} = \frac{1}{\zeta(s)} + O(x^{1-s}).
\end{align}
\end{lemma}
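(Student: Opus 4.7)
The plan is to reduce the estimate to a classical Dirichlet series identity and a tail bound. Since $s > 1$, the series $\sum_{n=1}^{\infty} \mu(n)/n^s$ converges absolutely and equals $1/\zeta(s)$; this is standard and follows from the Euler product for $\zeta(s)$. So I would write
\[
\sum_{1 \leq n \leq x} \frac{\mu(n)}{n^s} = \frac{1}{\zeta(s)} - \sum_{n > x} \frac{\mu(n)}{n^s},
\]
thereby reducing the lemma to showing that the tail satisfies $\sum_{n > x} \mu(n)/n^s = O(x^{1-s})$, with the implied constant depending only on $s$ (which is fine since $s$ is fixed throughout).

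For the tail bound, the approach is completely elementary: use $|\mu(n)| \leq 1$ and compare the resulting sum with an integral. Explicitly,
\[
\left| \sum_{n > x} \frac{\mu(n)}{n^s} \right| \leq \sum_{n \geq \lfloor x \rfloor + 1} \frac{1}{n^s} \leq \int_{\lfloor x \rfloor}^{\infty} \frac{dt}{t^s} = \frac{\lfloor x \rfloor^{1 - s}}{s - 1},
\]
where the integral comparison uses that $t \mapsto t^{-s}$ is positive and decreasing on $[1, \infty)$.

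Finally, to convert $\lfloor x \rfloor^{1-s}$ into $O(x^{1-s})$, I would split into two cases. For $x \geq 2$ we have $\lfloor x \rfloor \geq x/2$, so $\lfloor x \rfloor^{1-s} \leq 2^{s-1} x^{1-s}$. For $1 \leq x < 2$ the entire partial sum differs from $1/\zeta(s)$ by a bounded amount, and since $x^{1-s}$ is bounded below by a positive constant on this range, the bound $O(x^{1-s})$ holds trivially. Combining the two cases, the tail is $O(x^{1-s})$ with an implied constant depending only on $s$, completing the proof.

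There is no real obstacle here; the only minor care is the handling of the floor function in the integral comparison and the degenerate range $1 \leq x < 2$, both of which are routine.
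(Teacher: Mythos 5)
Your proposal is correct and follows essentially the same route as the paper: split off the tail of the absolutely convergent series $\sum_{n\ge 1}\mu(n)/n^s = 1/\zeta(s)$ and bound it trivially by $\sum_{n>x} n^{-s} = O(x^{1-s})$. The only difference is that you prove this tail estimate directly by integral comparison (with the minor floor-function and $1\le x<2$ bookkeeping), whereas the paper simply cites the standard bound from Apostol.
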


\begin{proof}
We have
\begin{align*}
\sum_{1 \leq n \leq x} \frac{\mu(n)}{n^s} & =\sum_{n=1}^{\infty} \frac{\mu(n)}{n^s} - \sum_{n > x} \frac{\mu(n)}{n^s} \\
& =\frac{1}{\zeta(s)}-\sum_{n > x} \frac{\mu(n)}{n^s},
\end{align*}
where we used the well-known formula
$$
\frac{1}{\zeta(s)}=\sum_{n=1}^{\infty} \frac{\mu(n)}{n^s}
$$
for the reciprocal of the Riemann zeta function. Then, using the asymptotic bound (see e.g. \cite[Theorem 3.2]{Apostol76})
$$
\sum_{n>x} \frac{1}{n^s}=O(x^{1-s}), \quad \text { for } s>1 \text { and } x \geq 1,
$$
we see that
$$
\left|\sum_{n > x} \frac{\mu(n)}{n^s}\right| \leq \sum_{n > x} \frac{|\mu(n)|}{n^s} \leq \sum_{n > x} \frac{1}{n^s}=O(x^{1-s}).
$$

\end{proof}

\begin{lemma}\label{lemma:no-singular-elliptic-curve-j-inv}
Let $j \in \Q \smallsetminus \{ 0, 1728 \}$ and $(A, B) \in \Z^2 \smallsetminus \{ (0, 0) \}$. Then the curve $E_{A, B} \colon y^2 = x^3 + Ax + B$ is nonsingular and has $j$-invariant $j(E_{A, B}) = j$ if and only if the point $(A, B)$ lies on the cuspidal cubic $\mathcal{C}_{a(j)} \colon y^2 = a(j) x^3$, where $a(j) = \dfrac{4(1728 - j)}{27j} \in \Q^{\times}$. Moreover, the curve $E_{A, B}$ is singular if and only if the point $(A, B)$ lies on the cuspidal cubic $\mathcal{C}_{-\frac{4}{27}} \colon y^2 = -\frac{4}{27} x^3$.
\end{lemma}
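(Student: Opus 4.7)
The proof will be a direct algebraic manipulation based on the two fundamental identities
\[
j(E_{A,B}) = 1728 \cdot \frac{4A^3}{4A^3 + 27B^2} \quad \text{and} \quad \Delta_{E_{A,B}} = -16(4A^3 + 27B^2),
\]
so my plan is to verify both equivalences by rearranging these into the defining equation of the appropriate cuspidal cubic. I will handle the second (singular) statement first, since it is immediate: $E_{A,B}$ is singular if and only if $\Delta_{E_{A,B}} = 0$, which happens if and only if $4A^3 + 27B^2 = 0$, i.e., $B^2 = -\tfrac{4}{27}A^3$, which is precisely the condition $(A,B) \in \mathcal{C}_{-4/27}$.

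For the first equivalence, I will work with the forward direction by assuming $E_{A,B}$ is nonsingular and $j(E_{A,B}) = j$. Cross-multiplying in the $j$-invariant formula yields $1728 \cdot 4 A^3 = j(4A^3 + 27B^2)$, which rearranges to $(1728 - j) \cdot 4A^3 = 27 j B^2$. Since $j \neq 0$, I may divide by $27j$ to obtain $B^2 = a(j) A^3$, so $(A,B) \in \mathcal{C}_{a(j)}$. For the converse, I will assume $(A,B) \in \mathcal{C}_{a(j)} \smallsetminus \{(0,0)\}$ and first rule out $A = 0$: the equation $B^2 = a(j) A^3$ forces $B = 0$ whenever $A = 0$, contradicting our hypothesis. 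Hence $A \neq 0$, and I can substitute $27 B^2 = 27 a(j) A^3$ into the discriminant to compute
\[
4A^3 + 27 B^2 = A^3\left(4 + 27 a(j)\right) = A^3 \cdot \frac{4 \cdot 1728}{j},
\]
where the last equality uses the definition $a(j) = \tfrac{4(1728-j)}{27j}$. Since $j \neq 0$ and $A \neq 0$, this is nonzero, so $E_{A,B}$ is nonsingular; plugging back into the $j$-invariant formula immediately gives $j(E_{A,B}) = 1728 \cdot 4A^3 / (A^3 \cdot 4 \cdot 1728 / j) = j$.

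There is no real obstacle here, only bookkeeping: the main things I need to be careful about are (i) verifying that $a(j) \in \Q^{\times}$ under the stated hypotheses on $j$ (well-defined because $j \neq 0$, nonzero because $j \neq 1728$), and (ii) correctly excluding the degenerate point $(A,B) = (0,0)$ which lies on every $\mathcal{C}_a$ but does not give rise to an elliptic curve. Both points are handled cleanly by the arithmetic above, so the proof should be short and self-contained.
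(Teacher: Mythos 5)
Your proof is correct, and the forward direction and the singularity criterion are exactly the paper's argument: rearrange $j(E_{A,B}) = 1728\cdot\frac{4A^3}{4A^3+27B^2}$ into $B^2 = a(j)A^3$, and read off singularity from $\Delta_{E_{A,B}} = -16(4A^3+27B^2) = 0$. Where you diverge is in the converse of the first equivalence: the paper establishes nonsingularity indirectly, by checking that no $j \in \Q\smallsetminus\{0,1728\}$ has $a(j) = -\tfrac{4}{27}$ and then invoking the fact that two cuspidal cubics $\mathcal{C}_a$, $\mathcal{C}_b$ with $a \neq b$ meet only at the origin, so a nonzero point of $\mathcal{C}_{a(j)}$ cannot lie on $\mathcal{C}_{-4/27}$. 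You instead substitute $27B^2 = 27a(j)A^3$ directly and compute $4A^3 + 27B^2 = A^3\bigl(4 + 27a(j)\bigr) = \frac{4\cdot 1728}{j}A^3 \neq 0$ (after ruling out $A = 0$), which simultaneously gives nonsingularity and, upon substitution back into the $j$-invariant formula, the identity $j(E_{A,B}) = j$. Your route is slightly more self-contained—it avoids the geometric intersection fact and makes the verification of $j(E_{A,B}) = j$ in the converse fully explicit—while the paper's argument has the mild advantage of reusing the cubic $\mathcal{C}_{-4/27}$ picture that also organizes the count of singular curves later. Both are complete; no gap.
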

\begin{proof}
Recall that the $j$-invariant of a nonsingular curve $E_{A,B}$ is given by 
\[
j(E_{A,B})=1728\dfrac{4A^3}{4A^3+27B^2}.
\]

\noindent $(\implies)$
If $E_{A, B}$ is nonsingular and $j(E_{A, B}) = j$, we can rewrite this condition as
\[
j(E_{A,B})=j\iff 1728\dfrac{4A^3}{4A^3+27B^2}=j\iff B^2=\dfrac{4(1728-j)}{27j}A^3.
\]
Thus, for a nonsingular curve $E_{A,B}$ to have $j$-invariant equal to $j$, for $j\neq 0,1728$, is equivalent to the point $(A,B) \in \Z^2 \smallsetminus \{ (0, 0) \}$ lying on the cuspidal cubic $\mathcal{C}_{a(j)} \colon y^2 = a(j) x^3$, where $a(j) := \dfrac{4(1728-j)}{27j}$. 

\noindent $(\impliedby)$
Now, a curve $E_{A,B}$ is singular if and only if $\Delta_{E_{A,B}}=-16(4A^3+27B^2)=0$. This condition can also be rewritten as
\[
\Delta_{E_{A,B}} = -16(4A^3 + 27B^2) = 0 \iff B^2 = -\dfrac{4}{27}A^3.
\]
Thus, a curve $E_{A,B}$ is singular if and only if the point $(A,B)$ lies on the cuspidal cubic $\mathcal{C}_{-\frac{4}{27}} \colon y^2 = -\frac{4}{27} x^3$. A quick verification shows that no $j\in\Q\smallsetminus\{0,1728\}$ satisfies $a(j) = -\frac{4}{27}$, and since two cuspidal cubics $\mathcal{C}_a$ and $\mathcal{C}_b$ with $a \neq b$ intersect in the affine plane $\mathbb{A}^2(\R)$ only at the origin, then if the point $(A,B)\in \Z^2 \smallsetminus \{ (0, 0) \}$ lies on the cuspidal cubic $C_{a(j)}$, it does not lie on the cubic $C_{-\frac{4}{27}}$. Hence the curve $E_{A, B}$ is nonsingular and has $j$-invariant $j(E_{A, B}) = j$.
\end{proof}

The following lemma gives an explicit form of a result stated by Brumer \cite[p. 455]{Bru92}. In particular, the lemma gives a decomposition of the family of elliptic curves $\widetilde{\mathcal{E}}(X; H_{\alpha,\beta})$ as a finite disjoint union of the twists of families of representatives of the $\Q$-isomorphism classes of elliptic curves in short Weierstrass form. We also decompose the family of elliptic curves with a given $j$-invariant, and provide an explicit construction of the twists, which will prove useful for finding the \textit{unique} $\Q$-isomorphism class representative of a given curve $E_{A,B}$ (see Remark \ref{remark:construction-Q-isomorphism-representative}).

\begin{lemma}\label{lemma: elliptic-family-decomposition}
For $d \in \Z$, we define $d * E_{A, B} := E_{d^4A, d^6 B}$. Then, every curve $E_{A', B'} \in \widetilde{\mathcal{E}}(X;H_{\alpha,\beta})$ can be written uniquely as a twist $d * E_{A, B}$ for a unique curve $E_{A, B} \in \mathcal{E}(d^{-12} X; H_{\alpha,\beta})$; the unique integer $d \in \Z_{\geq 1}$ is given by
\begin{align}\label{eqn:d-twist-decomposition}
d := \prod_{p \in \mathbb{P}} p^{\gamma_p(A', B')},
\end{align}
where $\gamma_p(A', B') := \max{\{ \gamma \in \Z_{\geq 0} \colon p^{4\gamma} \mid A' \text{ and } p^{6\gamma} \mid B' \}}$ and it satisfies that $1 \leq d \leq \lfloor X^{1/12} \rfloor$. In particular, this implies that we have the decomposition
\begin{align}\label{eqn:curve-families-decomposition}
\widetilde{\mathcal{E}}(X; H_{\alpha,\beta}) = \coprod_{d = 1}^{\lfloor X^{1/12} \rfloor} d * \mathcal{E}(d^{-12} X;H_{\alpha,\beta}). 
\end{align}
Moreover, for a given $j\in\Q$, we also have the decomposition
\begin{align}\label{eqn: curve-families-decomposition-j invariant}
\widetilde{\mathcal{E}}_j(X; H_{\alpha,\beta}) = \coprod_{d = 1}^{\lfloor X^{1/12} \rfloor} d * \mathcal{E}_j(d^{-12} X;H_{\alpha, \beta}). 
\end{align}
\end{lemma}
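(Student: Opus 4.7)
The plan is to show that the assignment $(d, E_{A, B}) \mapsto d * E_{A, B}$ is a bijection from $\coprod_{d = 1}^{\lfloor X^{1/12} \rfloor} \{d\} \times \mathcal{E}(d^{-12} X; H_{\alpha, \beta})$ onto $\widetilde{\mathcal{E}}(X; H_{\alpha, \beta})$. The intuition behind formula~\eqref{eqn:d-twist-decomposition} is that $d$ should be the largest positive integer with $d^{4} \mid A'$ and $d^{6} \mid B'$ holding simultaneously, and each $\gamma_p(A', B')$ records exactly this maximal exponent prime-by-prime.

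For the existence step, I would first observe that the product in~\eqref{eqn:d-twist-decomposition} is finite, since $\gamma_p(A', B') = 0$ for every prime $p$ not dividing $\gcd(|A'|,|B'|)$ (with the usual convention when one of them is zero). Setting $A := A'/d^{4}$ and $B := B'/d^{6}$, the divisibilities built into the definition of $d$ give $A, B \in \Z$. The identity $\Delta_{E_{A', B'}} = d^{12}\,\Delta_{E_{A, B}}$ transfers non-singularity, and the maximality baked into each $\gamma_p$ prevents any prime from satisfying $p^{4} \mid A$ and $p^{6} \mid B$ simultaneously, so $E_{A, B} \in \mathcal{E}$. Finally, the scaling $H_{\alpha,\beta}(E_{A', B'}) = d^{12}\,H_{\alpha,\beta}(E_{A, B})$ yields $H_{\alpha,\beta}(E_{A, B}) \leq d^{-12} X$, and combined with the trivial lower bound $H_{\alpha,\beta}(E_{A,B}) \geq 1$ (valid for $\alpha, \beta \geq 1$, which covers both $H^{\mathrm{cal}}$ and $H^{\mathrm{ncal}}$; otherwise one obtains $d \leq \lfloor(X/\min\{\alpha,\beta\})^{1/12}\rfloor$) gives $d \leq \lfloor X^{1/12} \rfloor$.

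Uniqueness is the delicate step, and is the one I expect to demand the most care; it reduces to a short $p$-adic calculation. Assuming $E_{A', B'} = d * E_{A, B}$ with $E_{A, B} \in \mathcal{E}$, for every prime $p$ I have $\ord_p(A') = 4\ord_p(d) + \ord_p(A)$ and $\ord_p(B') = 6\ord_p(d) + \ord_p(B)$, which immediately gives $\gamma_p(A', B') \geq \ord_p(d)$. Conversely, the membership $E_{A, B} \in \mathcal{E}$ means that $\ord_p(A) \leq 3$ or $\ord_p(B) \leq 5$ for every prime $p$; if one had $\gamma_p(A', B') \geq \ord_p(d) + 1$, then peeling off one more factor of $p^4$ from $A'$ and $p^6$ from $B'$ would force $\ord_p(A) \geq 4$ \emph{and} $\ord_p(B) \geq 6$, a contradiction. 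Hence $\ord_p(d) = \gamma_p(A', B')$ for every prime $p$, $d$ is uniquely determined by $(A', B')$, and $(A, B) = (A'/d^{4}, B'/d^{6})$ is uniquely determined as well.

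Existence and uniqueness together yield the disjoint decomposition~\eqref{eqn:curve-families-decomposition}. For the $j$-fixed statement~\eqref{eqn: curve-families-decomposition-j invariant}, it suffices to note that twisting preserves the $j$-invariant, $j(d * E_{A, B}) = j(E_{A, B})$, so intersecting both sides of the bijection with the subfamilies of curves having $j$-invariant equal to $j$ produces the analogous decomposition for $\widetilde{\mathcal{E}}_j(X; H_{\alpha, \beta})$. Beyond the $p$-adic bookkeeping in the uniqueness step, every remaining verification is a direct computation using the definitions and the scaling behavior of $\Delta$ and $H_{\alpha, \beta}$ under $d * (-)$.
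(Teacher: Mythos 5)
Your proposal is correct and takes essentially the same approach as the paper: the same explicit construction of $d$, the same $p$-adic maximality bookkeeping (you derive $\operatorname{ord}_p(d)=\gamma_p(A',B')$ directly for any valid decomposition, whereas the paper compares two candidate values of $d$ and reaches a contradiction, but the computation is the same), and the same remark that twisting preserves the $j$-invariant. Your caveat that the lower bound $H_{\alpha,\beta}(E_{A,B})\geq 1$, and hence $d\leq\lfloor X^{1/12}\rfloor$, requires $\min\{\alpha,\beta\}\geq 1$ is a fair refinement, since the paper asserts that inequality without this hypothesis (it does hold for both $H^{\mathrm{cal}}$ and $H^{\mathrm{ncal}}$).
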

\begin{proof}
Let $E_{A^{\prime},B^{\prime}}\in\widetilde{\mathcal{E}}(X;H_{\alpha,\beta})$, and let $d\in\Z_{\geq 1}$ as in \ref{eqn:d-twist-decomposition}. For the curve $E_{A^{\prime},B^{\prime}}$, take $A=A^{\prime}/d^4$ and $B=B^{\prime}/d^6$. Then, by definition, it follows that $E_{A^{\prime},B^{\prime}}=d*E_{A,B}$ and, since from the definition of the naive height, we have
\[
H_{\alpha,\beta}(d * E_{A^{\prime}, B^{\prime}}) = d^{12} \cdot H_{\alpha,\beta}(E_{A^{\prime}, B^{
\prime
}}),
\]
therefore, $H_{\alpha,\beta}(d * E_{A^{\prime}, B^{\prime}}) \leq X$ if and only if $H_{\alpha,\beta}(E_{A^{\prime}, B^{\prime}}) \leq X/d^{12}$. By the definition of $d$, for each prime $p \in \mathbb{P}$, the maximality of $\ord_p(d) := \gamma_p(A', B')$ implies that $p^4 \nmid A$ or $p^6 \nmid B$, so that $E_{A,B}\in\mathcal{E}(d^{-12}X;H_{\alpha,\beta})$.
Next, to prove that the curve $E_{A,B}$ is unique in $\mathcal{E}(d^{-12}X;H_{\alpha,\beta})$, suppose for a contradiction, that there exists another curve $E_{A_0,B_0} \in \mathcal{E}(d^{-12}X;H_{\alpha,\beta})$ and $d_0\in\Z_{\geq 1}$ such that $E_{A^{\prime},B^{\prime}}=d_0*E_{A_0,B_0}$. Then, we have
\begin{align}\label{eqn:twist-uniqueness}
\begin{cases}
d_0^4A_0=d^4A,\\
d_0^6B_0=d^6B.
\end{cases}
\end{align}
We want to prove that $d=d_0$. Without loss of generality, suppose that $d_0\geq d$. Now suppose, in order to reach a contradiction, that $d_0>d$. This implies that $0<\dfrac{d}{d_0}<1$, so that $\dfrac{d}{d_o}\not\in\Z$. Therefore, there exists a prime $p$ and  an integer $k \in \Z_{\geq 1}$ such that $\operatorname{ord}_{p}\left(\dfrac{d}{d_0}\right)=-k\leq -1$. Then, from equation \eqref{eqn:twist-uniqueness}, it follows that for this prime $p$ we have
\[
\begin{cases}
0\leq \operatorname{ord}_{p}(A_0)=\ord_p\bigg(\left(\dfrac{d}{d_0}\right)^4A\bigg)=4\operatorname{ord}_{p} \left(\dfrac{d}{d_0} \right)+\operatorname{ord}_{p}(A) \implies 4k\leq \operatorname{ord}_{p}(A) \vspace{4pt}\\
0\leq \operatorname{ord}_{p}(B_0)=\ord_p\bigg(\left(\dfrac{d}{d_0}\right)^6B\bigg)=6\operatorname{ord}_{p} \left(\dfrac{d}{d_0} \right)+\operatorname{ord}_{p}(B) \implies 6k\leq \operatorname{ord}_{p}(B)
\end{cases}
\]
It follows that $p^{4k}\mid A$ and $p^{6k}\mid B$, which is a contradiction. Hence, $d=d_0$, and $A_0=A$ and $B_0=B$.
Finally, notice that $H_{\alpha,\beta}(E_{A, B})\geq 1$, so that 
\[
1\leq d^{12}\leq d^{12}H_{\alpha,\beta}(E_{A, B})=H_{\alpha,\beta}(E_{A', B'})\leq X.
\]
We conclude that 
\[
1\leq d\leq \lfloor X^{1/12}\rfloor.
\]
These facts prove the decomposition \eqref{eqn:curve-families-decomposition}. To prove \eqref{eqn: curve-families-decomposition-j invariant}, simply notice that the $j$-invariant of a curve is invariant under the twist operator.
\end{proof}

We will also need the following generalized form of the Möbius inversion formula (see e.g. \cite[Theorem 2.23]{Apostol76}).

\begin{proposition}[Generalized Möbius inversion formula]\label{prop:moebius-inversion-formula}
Let $F(x)$ and $G(x)$ be complex-valued functions defined in the interval $[1, \infty)$, then
$$
G(x) = \sum_{1 \leq d \leq x} F \left( \frac{x}{d} \right)
$$
for every $x \geq 1$ if and only if
$$
F(x) = \sum_{1 \leq d \leq x} \mu(d) G \left( \frac{x}{d} \right)
$$
for every $x \geq 1$.
\end{proposition}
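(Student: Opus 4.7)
The plan is to prove the two implications by substituting one relation into the other, swapping the order of summation, and then invoking the well-known Möbius identity
\[
\sum_{d \mid n} \mu(d) = \begin{cases} 1 & \text{if } n = 1, \\ 0 & \text{if } n > 1. \end{cases}
\]
This is entirely standard (this is essentially Theorem 2.23 of Apostol), so the plan is to record the argument cleanly.

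For the forward direction, assume $G(x) = \sum_{1 \le d \le x} F(x/d)$ for every $x \ge 1$. I would compute
\[
\sum_{1 \le d \le x} \mu(d) G\!\left(\frac{x}{d}\right) = \sum_{1 \le d \le x} \mu(d) \sum_{1 \le e \le x/d} F\!\left(\frac{x}{de}\right).
\]
The inner condition $1 \le e \le x/d$ together with $1 \le d \le x$ is equivalent to the pair of constraints $de \le x$ with $d, e \ge 1$. Setting $n = de$, I would reindex the double sum as a sum over $1 \le n \le x$ with the inner sum ranging over factorizations $n = de$, i.e.\ over divisors $d \mid n$, obtaining
\[
\sum_{1 \le d \le x} \mu(d) G\!\left(\frac{x}{d}\right) = \sum_{1 \le n \le x} F\!\left(\frac{x}{n}\right) \sum_{d \mid n} \mu(d) = F(x),
\]
where the final equality uses the Möbius identity, which kills every term except $n = 1$.

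For the reverse direction, assume $F(x) = \sum_{1 \le d \le x} \mu(d) G(x/d)$ for every $x \ge 1$. The same interchange, applied symmetrically, gives
\[
\sum_{1 \le d \le x} F\!\left(\frac{x}{d}\right) = \sum_{1 \le d \le x} \sum_{1 \le e \le x/d} \mu(e) G\!\left(\frac{x}{de}\right) = \sum_{1 \le n \le x} G\!\left(\frac{x}{n}\right) \sum_{e \mid n} \mu(e) = G(x),
\]
again by the Möbius identity.

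The only point requiring any care, and the most likely place to slip, is the bookkeeping for the interchange of summation: one must check that the set of pairs $(d,e)$ with $1 \le d \le x$ and $1 \le e \le x/d$ really is the same as the set of pairs with $de \le x$, so that the reindexing $n = de$ is a genuine bijection onto the pairs $(d, n)$ with $d \mid n$ and $1 \le n \le x$. Since $x$ need not be an integer, I would note explicitly that $d \le x$ is automatic from $de \le x$ (as $e \ge 1$), and conversely that $de \le x$ with $d, e \ge 1$ forces $e \le x/d$ and $d \le x$. Once this is confirmed, the proof is just the two symmetric computations above.
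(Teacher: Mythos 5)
Your proof is correct. Note that the paper itself does not prove this proposition; it simply cites \cite[Theorem 2.23]{Apostol76}, and your argument is precisely the standard one given there: substitute one relation into the other, reindex the pairs $(d,e)$ with $de \leq x$ by $n = de$, and apply the identity $\sum_{d \mid n} \mu(d) = 0$ for $n > 1$ (and $=1$ for $n=1$). Your explicit check that the reindexing is a bijection even when $x$ is not an integer, and the implicit use that $x/d \geq 1$ so the hypothesis may be applied at $x/d$, are exactly the points that need care, so nothing is missing.
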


We are now ready to prove the main theorems from the introduction. We start by proving Theorem \ref{thm:global-counts-generalized-intro}. For this we prove the following version for the generalized naive height $H_{\alpha, \beta}$.

\begin{theorem}\label{thm:global-counts-generalized}
For every $X > 1$, the number of elliptic curves in the families $\mathcal{E}(X; H_{\alpha, \beta})$ and $\widetilde{\mathcal{E}}(X; H_{\alpha, \beta})$ satisfy the formulas
\begin{align}\label{eqn:E-global-formula-generalized}
\#\mathcal{E}(X; H_{\alpha, \beta}) = \frac{4}{\alpha^{1/3} \beta^{1/2} \zeta(10)} X^{5/6}+O(X^{1/2})
\end{align}
and
\begin{align}\label{eqn:widetilde-E-global-formula-generalized-proof-section}
\#\widetilde{\mathcal{E}}(X; H_{\alpha, \beta}) &= \left( 2 \left\lfloor \frac{X^{1/3}}{\alpha^{1/3}} \right\rfloor + 1 \right) \left( 2 \left\lfloor \frac{X^{1/2}}{\beta^{1/2}} \right\rfloor + 1 \right) - 2 \left\lfloor c(\alpha, \beta) X^{1/6} \right\rfloor - 1 \\
&= \frac{4}{\alpha^{1/3} \beta^{1/2}}X^{5/6}+O(X^{1/2}),
\end{align}
where 
\begin{align}\label{eqn:c-alpha-beta-constant-definition-proof-section}
c(\alpha, \beta) := \min{ \left\{  \frac{1}{3^{1/2} \alpha^{1/6}}, \frac{1}{2^{1/3} \beta^{1/6}} \right\} }.
\end{align}
\end{theorem}

\begin{proof}
We start by proving the exact formula \eqref{eqn:widetilde-E-global-formula-generalized-proof-section}. Then we will use it to prove the asymptotic formula \eqref{eqn:E-global-formula-generalized}. First, observe that $H_{\alpha, \beta}(E_{A, B}) \leq X$ if and only if $|A| \leq \dfrac{X^{1/3}}{\alpha^{1/3}}$ and $|B| \leq \dfrac{X^{1/2}}{\beta^{1/2}}$. Then the total number of curves in $\widetilde{\mathcal{E}}(X; H_{\alpha, \beta})$ is equal to the number of lattice points $(A, B) \in \Z^2$ that lie inside the rectangular box
$$
\mathcal{B}(X; \alpha, \beta) := \left\{ (x, y) \in \R^2 \suchthat \text{$|x| \leq \frac{X^{1/3}}{\alpha^{1/3}}$ and $|y| \leq \frac{X^{1/2}}{\beta^{1/2}}$} \right\},
$$
minus the number of lattice points $(A, B) \in \Z^2$ that lie on the cuspidal cubic $\mathcal{C}_{-\frac{4}{27}} \colon y^2 = -\frac{4}{27} x^3$, because these last points correspond to the singular curves inside the rectangular box, by Lemma \ref{lemma:no-singular-elliptic-curve-j-inv}.

Now, we have
\begin{align}\label{eqn:rectangular-box-count}
\mathcal{B}(X; \alpha, \beta) \cap \Z^2 = \left( 2 \left\lfloor \frac{X^{1/3}}{\alpha^{1/3}} \right\rfloor + 1 \right) \left( 2 \left\lfloor \frac{X^{1/2}}{\beta^{1/2}} \right\rfloor + 1 \right).
\end{align}
On the other hand, by Proposition \ref{prop:cuspidal-cubic-exact-count}, the number of lattice points inside the rectangular box that also lie on the cuspidal cubic $\mathcal{C}_{-\frac{4}{27}}$ is equal to
\begin{align}\label{eqn:singular-curves-count}
\# \mathcal{C}_{-\frac{4}{27}} \left(\Z; \frac{X^{1/3}}{\alpha^{1/3}}, \frac{X^{1/2}}{\beta^{1/2}} \right) &= 2 \left\lfloor \min{ \left\{  \frac{|\frac{-4}{27}|^{1/2}}{N_{-\frac{4}{27}}} \frac{X^{1/6}}{\alpha^{1/6}}, \frac{|\frac{-4}{27}|^{1/3}}{N_{-\frac{4}{27}}} \frac{X^{1/6}}{\beta^{1/6}} \right\} }  \right\rfloor + 1 \notag\\
&= 2 \left\lfloor \min{ \left\{  \frac{1}{3^{1/2} \alpha^{1/6}}, \frac{1}{2^{1/3} \beta^{1/6}} \right\} } X^{1/6}  \right\rfloor + 1,
\end{align}
where we have calculated the values
$$
N_{-\frac{4}{27}} = \frac{2}{3}, \quad \frac{|\frac{-4}{27}|^{1/2}}{N_{-\frac{4}{27}}} = \frac{1}{3^{1/2}} \quad \text{and} \quad \frac{|\frac{-4}{27}|^{1/3}}{N_{-\frac{4}{27}}} = \frac{1}{2^{1/3}}.
$$
Thus, combining equations \eqref{eqn:rectangular-box-count} and \eqref{eqn:singular-curves-count} and \eqref{eqn:c-alpha-beta-constant-definition-proof-section} yields
\begin{align}\label{eqn:widetilde-global-count-proof-equation}
\# \widetilde{\mathcal{E}}(X; H_{\alpha, \beta}) = \left( 2 \left\lfloor \frac{X^{1/3}}{\alpha^{1/3}} \right\rfloor + 1 \right) \left( 2 \left\lfloor \frac{X^{1/2}}{\beta^{1/2}} \right\rfloor + 1 \right) - 2 \left\lfloor c(\alpha, \beta) X^{1/6} \right\rfloor - 1.
\end{align}

Now we proceed to prove formula \eqref{eqn:E-global-formula-generalized}. Starting from the decomposition \eqref{eqn:curve-families-decomposition} we obtain the formula
\begin{align*}
\# \widetilde{\mathcal{E}}(X; H_{\alpha, \beta}) = \sum_{1 \leq d \leq X^{1/12}} \# \left( d * \mathcal{E}(d^{-12}X; H_{\alpha, \beta}) \right) = \sum_{1 \leq d \leq X^{1/12}} \# \mathcal{E}(d^{-12}X; H_{\alpha, \beta}).
\end{align*}
Then, applying the Möbius inversion formula from Proposition \ref{prop:moebius-inversion-formula} to the functions $G(x) := \# \widetilde{\mathcal{E}}(x^{12};H_{\alpha,\beta})$ and $F(x) := \# \mathcal{E}(x^{12};H_{\alpha,\beta})$ with $x := X^{1/12}$, and using formula \eqref{eqn:widetilde-global-count-proof-equation} we obtain
\begin{align}\label{eqn:E-mu-equality}
 \# \mathcal{E}(X; H_{\alpha, \beta}) &= \sum_{1 \leq d \leq X^{1/12}} \mu(d)  \# \widetilde{\mathcal{E}}(d^{-12}X; H_{\alpha, \beta}) \notag\\
 &= \sum_{1 \leq d \leq X^{1/12}} \mu(d)\left(  \left( 2 \left\lfloor \frac{X^{1/3}}{d^4 \alpha^{1/3}} \right\rfloor + 1 \right) \left( 2 \left\lfloor \frac{X^{1/2}}{d^6 \beta^{1/2}} \right\rfloor + 1 \right) - 2 \left\lfloor c(\alpha, \beta) \frac{X^{1/6}}{d^2} \right\rfloor - 1  \right) \notag \\
 &= \sum_{1 \leq d \leq X^{1/12}} \mu(d) \left(  \left(\frac{2 X^{1/3}}{d^4 \alpha^{1/3}} + O(1)  \right) \left(\frac{2 X^{1/2}}{d^6 \beta^{1/2}} + O(1)  \right) - \frac{2 c(\alpha, \beta)}{d^2} X^{1/6} + O(1) \right) \notag\\
 &= \sum_{1 \leq d \leq X^{1/12}} \mu(d) \left( \frac{4}{d^{10} \alpha^{1/3} \beta^{1/2}} X^{5/6} + \frac{1}{d^6}O(X^{1/2}) \right) \notag\\
 &= \frac{4}{\alpha^{1/3} \beta^{1/2}}\left( \sum_{1 \leq d \leq X^{1/12}} \frac{\mu(d)}{d^{10}} \right) X^{5/6} + \left( \sum_{1 \leq d \leq X^{1/12}} \frac{\mu(d)}{d^6} \right) O(X^{1/2}),
\end{align}
where we have used that $\lfloor X\rfloor=X+O(1)$ and also in the next to last equality we only kept the largest of the big-O terms. Now, using Lemma \ref{lemma:moebius-sum-lemma} for the sums in equation \eqref{eqn:E-mu-equality}, we get that
\begin{align*}
\# \mathcal{E}(X; H_{\alpha, \beta}) &= \frac{4}{\alpha^{1/3} \beta^{1/2}} \left( \frac{1}{\zeta(10)} + O(X^{-9/12}) \right) X^{5/6} + \left( \frac{1}{\zeta(6)} + O(X^{-5/12}) \right) O(X^{1/2}) \\
&= \frac{4}{\alpha^{1/3} \beta^{1/2} \zeta(10)} X^{5/6}+O( X^{1/2}). 
\end{align*}
\end{proof}

Now we give a proof of Theorem \ref{thm:parametrization-and-exact-count-curves-with-j-introduction}, which we restate here for the convenience of the reader.

\begin{theorem}\label{theo: elliptic-j-invariant-exact-count}
Let $j \in \mathbb{Q} \smallsetminus \{0,1728\}$. Then the set of all elliptic curves $E_{A, B}: y^2=x^3+Ax+B$ with $A, B \in \mathbb{Z}$ that have $j\left(E_{A, B}\right)=j$ and $H_{\alpha, \beta}\left(E_{A, B}\right) \leq X$ can be explicitly described parametrically by
\begin{align}\label{eq: elliptic-j-inv-parametrization}
\widetilde{\mathcal{E}}_j\left(X ; H_{\alpha, \beta}\right) = \left\{ E_{A(j, m), B(j, m)} \suchthat \text {$m \in \mathbb{Z} \smallsetminus \{  0\}$ and $|m| \leq c(j; H_{\alpha, \beta}) X^{1/6}$} \right\}
\end{align}
where $A(j, m)$ and $B(j, m)$ are the integers defined by
\begin{align}
A(j,m)=\frac{N_{a(j)}^2 m^2}{a(j)}\quad \text{and} \quad  B(j,m)=\frac{N_{a(j)}^3 m^3}{a(j)}
\end{align}
and the constant $c(j; H_{\alpha, \beta})$ is defined by
\begin{align}
c(j; H_{\alpha, \beta}) := \min \left\{\frac{|a(j)|^{1 / 2}}{N_{a(j)} \alpha^{1 / 6}}, \frac{|a(j)|^{1 / 3}}{N_{a(j)} \beta^{1 / 6}}\right\}.
\end{align}
This implies that
\begin{align}\label{eqn:exact-j-count-generalized-height-formula}
\# \widetilde{\mathcal{E}}_j\left(X ; H_{\alpha, \beta}\right)=2\left \lfloor c(j; H_{\alpha, \beta}) X^{1 / 6}\right \rfloor.
\end{align}
Moreover, for any $m \in \Z \smallsetminus \{ 0 \}$ and any prime $p \in \mathbb{P}$ we have that 
\[
2\ord_p(m) \leq \ord_p{(A(j, m))} \leq 2\ord_p(m)+1 \quad \text{if $\ord_p{(a(j))} \geq 0$}
\]  
and 
\[
3\ord_p(m) \leq \ord_p{(B(j, m))} \leq 3\ord_p(m)+2 \quad \text{if $\ord_p{(a(j))} < 0$}.
\] 
In particular, this implies that the curve $E_{A(j,m),B(j,m)}\in\mathcal{E}_j(X;H_{\alpha,\beta})$ if and only if $m$ is square-free and $|m| \leq c(j; H_{\alpha, \beta}) X^{1/6}$, in other words, we have
\begin{align}\label{eqn:Q-isomorphism-classes-elliptic-j-inv-parametrization}
\mathcal{E}_j\left(X ; H_{\alpha, \beta}\right) = \left\{ E_{A(j, m), B(j, m)} \suchthat \text {$m \in \mathbb{Z} \smallsetminus \{  0\}$ is square-free and $|m| \leq c(j; H_{\alpha, \beta}) X^{1/6}$} \right\}.
\end{align}
\end{theorem}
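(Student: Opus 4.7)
The plan is to reduce this theorem to two pieces already in place: Lemma \ref{lemma:no-singular-elliptic-curve-j-inv}, which identifies the curves $E_{A,B}$ with $j(E_{A,B}) = j \in \Q \smallsetminus \{0, 1728\}$ with the integral points on the cuspidal cubic $\mathcal{C}_{a(j)}\colon y^2 = a(j) x^3$, and Proposition \ref{prop:cuspidal-cubic-exact-count}, which gives an explicit bijection $\varphi_{a(j)}\colon N_{a(j)}\Z \to \mathcal{C}_{a(j)}(\Z)$ sending $t \mapsto (t^2/a(j), t^3/a(j))$ together with the exact count of integral points in a rectangular box.

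First I would translate the height constraint: $H_{\alpha, \beta}(E_{A, B}) \leq X$ is equivalent to $|A| \leq X^{1/3}/\alpha^{1/3}$ and $|B| \leq X^{1/2}/\beta^{1/2}$. Combined with Lemma \ref{lemma:no-singular-elliptic-curve-j-inv}, this identifies $\widetilde{\mathcal{E}}_j(X; H_{\alpha,\beta})$ with $\mathcal{C}_{a(j)}(\Z; X^{1/3}/\alpha^{1/3}, X^{1/2}/\beta^{1/2}) \smallsetminus \{(0,0)\}$ (the origin is excluded because it produces a singular curve). Writing $t = N_{a(j)} m$ and applying Proposition \ref{prop:cuspidal-cubic-exact-count} directly yields the parametric description \eqref{eq: elliptic-j-inv-parametrization} with $m$ ranging over $\Z \smallsetminus \{0\}$ subject to
\[
|m| \leq \min\left\{\frac{|a(j)|^{1/2}}{N_{a(j)} \alpha^{1/6}},\; \frac{|a(j)|^{1/3}}{N_{a(j)} \beta^{1/6}}\right\} X^{1/6} = c(j; H_{\alpha, \beta})\, X^{1/6},
\]
and removing the term $m = 0$ from the count of Proposition \ref{prop:cuspidal-cubic-exact-count} yields the exact formula \eqref{eqn:exact-j-count-generalized-height-formula}.

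Next I would verify the valuation bounds by a direct computation. Since $\ord_p(A(j,m)) = 2\ord_p(m) + 2\alpha_p(a(j)) - \ord_p(a(j))$, the bound $2\ord_p(m) \leq \ord_p(A(j,m)) \leq 2\ord_p(m) + 1$ for $\ord_p(a(j)) \geq 0$ follows from the identity $2\lceil n/2 \rceil - n \in \{0, 1\}$ applied to $n = \ord_p(a(j))$. The bound $3\ord_p(m) \leq \ord_p(B(j,m)) \leq 3\ord_p(m) + 2$ for $\ord_p(a(j)) < 0$ follows similarly from $3\lceil n/3 \rceil - n \in \{0, 1, 2\}$. A short side computation also confirms that in both cases the quantities $2\alpha_p(a(j)) - \ord_p(a(j))$ and $3\alpha_p(a(j)) - \ord_p(a(j))$ remain nonnegative for every $p \in \mathbb{P}$, giving the global inequalities $\ord_p(A(j,m)) \geq 2\ord_p(m)$ and $\ord_p(B(j,m)) \geq 3\ord_p(m)$.

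Finally, to obtain \eqref{eqn:Q-isomorphism-classes-elliptic-j-inv-parametrization}, I would argue both directions of the equivalence ``$E_{A(j,m), B(j,m)} \in \mathcal{E}_j$ if and only if $m$ is square-free.'' If $m$ is not square-free, some prime $p$ has $\ord_p(m) \geq 2$, and the global inequalities above give $\ord_p(A(j,m)) \geq 4$ and $\ord_p(B(j,m)) \geq 6$, so $E_{A(j,m),B(j,m)} \notin \mathcal{E}_j$. Conversely, assuming $m$ is square-free, suppose some prime $p$ satisfied $p^4 \mid A(j,m)$ and $p^6 \mid B(j,m)$: in the case $\ord_p(a(j)) \geq 0$ the upper bound gives $\ord_p(A(j,m)) \leq 3$, a contradiction; in the case $\ord_p(a(j)) < 0$ the upper bound on $\ord_p(B(j,m))$ gives $\ord_p(B(j,m)) \leq 5$, again a contradiction. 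I expect the only delicate part is the case analysis in this last step, since it relies on pairing the correct valuation inequality (the one for $A$ or the one for $B$) with each sign of $\ord_p(a(j))$; the rest is a careful bookkeeping exercise.
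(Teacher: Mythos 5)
Your proposal is correct and follows essentially the same route as the paper: translate the height bound into the box, invoke Lemma \ref{lemma:no-singular-elliptic-curve-j-inv} to identify $\widetilde{\mathcal{E}}_j(X;H_{\alpha,\beta})$ with the nonzero integral points of $\mathcal{C}_{a(j)}$ in that box, apply Proposition \ref{prop:cuspidal-cubic-exact-count} for the parametrization and count, and then check the valuation inequalities via the ceiling identities. Your explicit two-way case analysis for the square-free criterion (including the observation that $2\alpha_p(a(j))-\ord_p(a(j))$ and $3\alpha_p(a(j))-\ord_p(a(j))$ are nonnegative for all $p$) simply spells out a step the paper leaves implicit, and it is correct.
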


\begin{proof}
Again, we note that $H_{\alpha,\beta}(E_{A, B}) \leq X$ if and only if $|A| \leq \dfrac{X^{1/3}}{\alpha^{1/3}}$ and $|B| \leq \dfrac{X^{1/2}}{\beta^{1/2}}$. Hence, the bound on the naive height $H_{\alpha,\beta}(E_{A, B}) \leq X$ is equivalent to the lattice point $(A, B) \in \Z^2$ lying in a rectangular box centered at the origin.
\begin{figure}[H]
\begin{tikzpicture}[scale=0.3]
\draw[thick, ->] (-13,0) -- (13,0) node[right] {$x$};
\draw[thick, ->] (0,-17) -- (0,17) node[above] {$y$};
\draw[thick] (-12.05, -16.10) rectangle (12.05, 16.10);
\draw[very thick, GaloisRed, domain=-12:-0.005, samples=50] plot (\x, {sqrt(-4/27*\x^3)});
\draw[very thick, GaloisRed, domain=-12:-0.005, samples=50] plot (\x, {-sqrt(-4/27*\x^3)});
\draw[very thick, blue, domain=-12.2:-0.005, samples=50] plot (\x, {sqrt(-0.041666*(\x)^3)});
\draw[very thick, blue, domain=-12.2:-0.005, samples=50] plot (\x, {-sqrt(-0.041666*(\x)^3)});

\foreach \x in {-12,-11,-10,...,12}
    \foreach \y in {-16,-15,-14,...,16}
        \fill[GaloisBlue] (\x,\y) circle           (0.2);

\foreach \m in {-2,-1,...,2}
    \fill[black] ({-3*(\m)^2}, {2*(\m)^3}) circle (0.3);
\fill[black](-6,3) circle (0.3);
\fill[black](-6,-3) circle (0.3);
\end{tikzpicture}
\caption{The box defined by the condition \( H^{\mathrm{cal}}(E_{A, B}) \leq X \) for \( X = 7000 \), with the lattice points \( (A, B) \in \mathbb{Z}^2 \) contained within it. The curves shown are the cuspidal cubic \( \mathcal{C}_{\frac{-4}{27}} \), associated with the singular curves (the outer red curve), and the cuspidal cubic \( \mathcal{C}_{a(j)} \) for \( j = -287496 \) (the inner blue curve). The integral points on these cuspidal cubics are highlighted with slightly larger circles.}
\label{fig:box}
\end{figure}
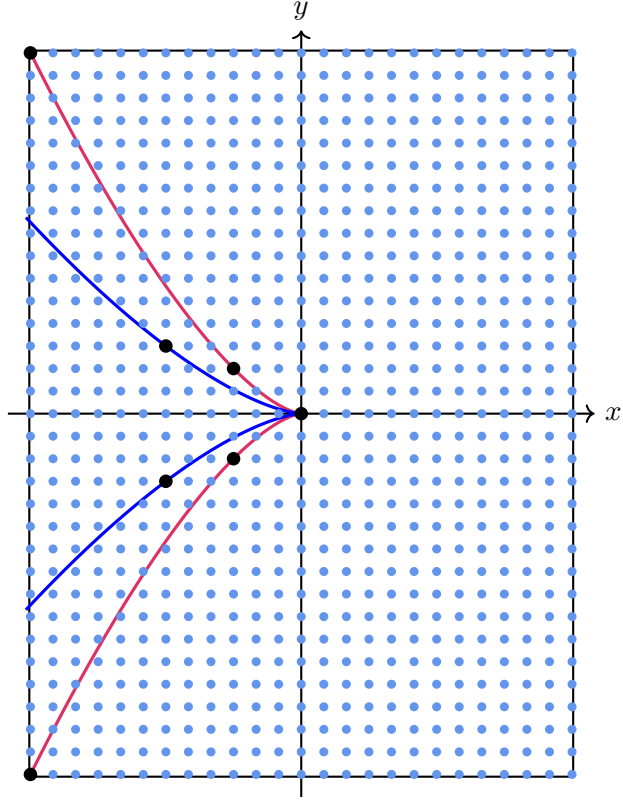
Next, Lemma \ref{lemma:no-singular-elliptic-curve-j-inv} implies that the quantity $\# \widetilde{\mathcal{E}}_j(X; H_{\alpha,\beta})$ is equal to the number of lattice points $(A, B) \in \Z^2 \smallsetminus{\{ (0, 0) \}}$ that lie on the cuspidal cubic $\mathcal{C}_{a(j)}: y^2=a(j)x^3$, with $a(j)=\frac{4(1728-j)}{27j}$ and that  $|A| \leq \dfrac{X^{1/3}}{\alpha^{1/3}}$ and $|B| \leq \dfrac{X^{1/2}}{\beta^{1/2}}$. Thus, in the notation of Proposition \ref{prop:cuspidal-cubic-exact-count}, we have a one-to-one correspondence
\begin{align*}
 \widetilde{\mathcal{E}}_j(X;H_{\alpha,\beta}) &\longleftrightarrow  \mathcal{C}_{a(j)}\left(\mathbb{Z};\frac{X^{1/3}}{\alpha^{1/3}}, \frac{X^{1/2}}{\beta^{1/2}}\right)\smallsetminus \{(0,0)\}.\\
 E_{A, B} &\longleftrightarrow (A, B)
\end{align*}
Then Proposition \ref{prop:cuspidal-cubic-exact-count} with the choices $a=a(j)$, $T_1=\dfrac{X^{1/3}}{\alpha^{1/3}}$ and $T_2=\dfrac{X^{1/2}}{\beta^{1/2}}$, yields the parametrization 
\begin{align*}
\widetilde{\mathcal{E}}_j\left(X ; H_{\alpha, \beta}\right)=\left\{ E_{A(j, m), B(j, m)} \suchthat \text {$m \in \mathbb{Z} \smallsetminus \{  0\}$ and $|m| \leq c(j;H_{\alpha,\beta})X^{1/6}$} \right\},
\end{align*}
where $c(j;H_{\alpha,\beta})= \min{\left\{\dfrac{|a|^{1/2}}{N_a\alpha^{1/6}}, \dfrac{|a|^{1/3}}{N_a\beta^{1/6}}\right\}}$ and where $ A(j, m)$ and $B(j, m)$ are the integers defined by
\begin{align*}
A(j,m)=\frac{N_{a(j)}^2 m^2}{a(j)}\quad \text{and} \quad  B(j,m)=\frac{N_{a(j)}^3 m^3}{a(j)}.
\end{align*}
This implies that
\begin{align*}
\# \widetilde{\mathcal{E}}_j(X;H_{\alpha,\beta})=2\left\lfloor\min{\left\{\frac{|a(j)|^{1/2}}{N_{a(j)}\alpha^{1/6}},\frac{|a(j)|^{1/3}}{N_{a(j)}\beta^{1/6}}\right\}}\cdot X^{1/6}\right\rfloor=2\left\lfloor c(j; H_{\alpha,\beta}) X^{1/6}\right\rfloor.
\end{align*}
We now prove the last inequalities stated in the theorem.
Recall that for a rational number $a \in \Q^{\times}$, we have $\ord_p{(N_a)} := \alpha_p(a)$, where
\[
\alpha_{p}(a):= \begin{cases} 
\left \lceil\dfrac{\operatorname{ord}_{p}(a)}{2} \right \rceil & \text{if $\operatorname{ord}_{p}(a) \geq 0$,} \vspace{5pt}\\ \left \lceil\dfrac{\operatorname{ord}_{p}(a)}{3} \right \rceil & \text{if $\operatorname{ord}_{p}(a) < 0$.}
\end{cases}
\]
In particular, when $\ord_p{(a)} \geq 0$, we have
\begin{align}\label{eqn: alpha_p-inequalities-1}
\frac{\ord_p{(a)}}{2} \leq \alpha_p{(a)} < \frac{\ord_p{(a)}}{2} + 1
\end{align}
On the other hand, when $\ord_p(a) < 0$, we have
\begin{align}\label{eqn:alpha_p-inequalities-2}
\frac{\ord_p{(a)}}{3} \leq \alpha_p{(a)} < \frac{\ord_p{(a)}}{3} + 1.
\end{align}
Let $m\in\Z\smallsetminus\{0\}$ and $p\in\mathbb{P}$. If $\ord_p(a(j))\geq 0$, then we have that
\begin{align*}
\ord_p(A(j,m))=\ord_p\left(\frac{N^2_{a(j)}m^2}{a(j)}\right) &= 2\ord_p(N_{a(j)})+2\ord_p(m)-\ord_p(a(j))\\
&= 2\left\lceil\frac{\ord_p(a(j))}{2}\right\rceil+2\ord_p(m)-\ord_p(a(j)).
\end{align*}
Then, using \eqref{eqn: alpha_p-inequalities-1} yields
\begin{align*}
2\ord_p(m) \leq \ord_p{(A(j, m))} \leq 2\ord_p(m)+1.
\end{align*}
Similarly, if $\ord_p(a(j)) < 0$, then 
\begin{align*}
\ord_p(B(j,m))=\ord_p\left(\frac{N^3_{a(j)}m^3}{a(j)}\right) &= 3\ord_p(N_{a(j)})+3\ord_p(m)-\ord_p(a(j))\\
&= 3\left\lceil\frac{\ord_p(a(j))}{3}\right\rceil+3\ord_p(m)-\ord_p(a(j))
\end{align*}
and using \eqref{eqn:alpha_p-inequalities-2}, we conclude that
\begin{align*}
3\ord_p(m) \leq \ord_p{(B(j, m))} \leq 3\ord_p(m)+2 \quad \text{if $\ord_p{(a(j))} < 0$}.
\end{align*}
Finally, these inequalities imply that $E_{A(j,m),B(j,m)}\in\mathcal{E}_j(X;H_{\alpha,\beta})$ if and only if $m$ is square-free.
\end{proof}

\begin{remark}\label{remark:construction-Q-isomorphism-representative}
The parametrization in \eqref{eq: elliptic-j-inv-parametrization} not only enables the enumeration of curves in $ \widetilde{\mathcal{E}}_j $ by generalized naive height, but also identifies the $ \Q $-isomorphism class representative of any such curve. Given $ E_{A', B'} \in \widetilde{\mathcal{E}}_j $, we recover its representative $ E_{A, B} $ by setting $ A = A'/d^4 $ and $ B = B'/d^6 $, where $ d $ is the integer defined in Lemma \ref{lemma: elliptic-family-decomposition}. Since $ E_{A, B} \in \mathcal{E}_j $, this curve is the unique representative of the $ \Q $-isomorphism class. Moreover, a curve $ E_{A, B} \in \widetilde{\mathcal{E}}_j $ is a representative if and only if the corresponding parameter $ m $ in \eqref{eq: elliptic-j-inv-parametrization} is square-free.

\end{remark}

Recall that an integer $n$ is called $k$-free if there is no prime $p\in\Z$ such that $p^k|n$. The following proposition states a well-known asymptotic formula for the function $Q_k(X)$, which counts the number of $k$-free integers in the interval $[1,X]$. A proof in general can be found in \cite[Theorem 1.1]{EL29} (for a more recent reference see also \cite[Proposition 2.2]{MV07} for the $k = 2$ case and \cite[Exercise 2.1.1 (3)]{MV07} for the general case). This result will be used in the proofs of our asymptotic formulas for counting elliptic curves.

\begin{proposition}\label{prop: k-free-numbers-asymptotics}
For $X\in\R_{\geq 0}$ and $k\geq 2$, let $Q_k(X)$ denote the number of $k$-free integers $n$ such that $1 \leq n\leq X$. Then
\begin{align}
Q_k(X)=\frac{1}{\zeta(k)}X+O(X^{1/k})
\end{align}
as $X\rightarrow\infty$.
\end{proposition}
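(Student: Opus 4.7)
The plan is to derive the asymptotic by standard Möbius inversion, exploiting the characterization of $k$-free integers via the identity $\mathbf{1}_{\{n \text{ is } k\text{-free}\}} = \sum_{d^k \mid n} \mu(d)$. This identity follows from the fact that $\sum_{d \mid m} \mu(d) = [m = 1]$ applied to the largest $k$-th power divisor of $n$.

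First, I would write
\begin{align*}
Q_k(X) = \sum_{n \leq X} \sum_{d^k \mid n} \mu(d) = \sum_{d \leq X^{1/k}} \mu(d) \left\lfloor \frac{X}{d^k} \right\rfloor,
\end{align*}
after swapping the order of summation and observing that $d^k \mid n \leq X$ forces $d \leq X^{1/k}$. Using $\lfloor y \rfloor = y + O(1)$, this becomes
\begin{align*}
Q_k(X) = X \sum_{d \leq X^{1/k}} \frac{\mu(d)}{d^k} + O\!\left( \sum_{d \leq X^{1/k}} 1 \right) = X \sum_{d \leq X^{1/k}} \frac{\mu(d)}{d^k} + O(X^{1/k}).
\end{align*}

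Next, I would invoke Lemma \ref{lemma:moebius-sum-lemma} with $s = k \geq 2$ and $x = X^{1/k}$, which yields
\begin{align*}
\sum_{d \leq X^{1/k}} \frac{\mu(d)}{d^k} = \frac{1}{\zeta(k)} + O\!\left( (X^{1/k})^{1-k} \right) = \frac{1}{\zeta(k)} + O(X^{1/k - 1}).
\end{align*}
Multiplying through by $X$ gives a main term of $X/\zeta(k)$ and a secondary error of $O(X \cdot X^{1/k - 1}) = O(X^{1/k})$, which merges with the earlier error to yield the desired estimate.

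No step here is a serious obstacle; the entire argument reduces to two lines once Lemma \ref{lemma:moebius-sum-lemma} is available. The only mild subtlety is confirming that the truncation of the Möbius series at $X^{1/k}$ (rather than at infinity) incurs only an acceptable error, which is precisely what Lemma \ref{lemma:moebius-sum-lemma} guarantees. If one wanted to sharpen the error term under additional hypotheses (as remarked earlier in the paper in connection with the Riemann Hypothesis), one would replace this elementary estimate by a deeper bound on the partial sums of $\mu(d)/d^k$, but for the unconditional statement no further ingredients are needed.
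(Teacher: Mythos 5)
Your argument is correct. Note that the paper does not actually prove Proposition \ref{prop: k-free-numbers-asymptotics}; it simply cites the literature (Evelyn--Linfoot and Montgomery--Vaughan), and what you have written is precisely the classical M\"obius-inversion proof that appears in those references: the identity $\sum_{d^k\mid n}\mu(d)=[n\ \text{is }k\text{-free}]$, the interchange of summation giving $\sum_{d\le X^{1/k}}\mu(d)\lfloor X/d^k\rfloor$, the floor estimate contributing $O(X^{1/k})$, and the tail bound for $\sum_{d\le X^{1/k}}\mu(d)/d^k$. Your version has the small additional merit of being self-contained within the paper, since the only input beyond elementary manipulations is Lemma \ref{lemma:moebius-sum-lemma}, which the paper proves independently; all the steps, including the final error bookkeeping $X\cdot O(X^{1/k-1})=O(X^{1/k})$, are sound.
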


We now give the proof of Theorem \ref{thm:fixed-j-theorem-generalized}, which we state again for the convenience of the reader.

\begin{theorem}\label{theo: elliptic-curve-formulas}
Let $j \in \Q$. Then for every $X > 1$, the number of elliptic curves in the families $\mathcal{E}_j(X;H_{\alpha,\beta})$ and $\widetilde{\mathcal{E}}_j(X;H_{\alpha,\beta})$ satisfies the following asymptotic formulas.
For $j=0$, we have
\begin{align}
\# \mathcal{E}_0(X;H_{\alpha,\beta}) = \frac{2}{\beta^{1/2}\zeta(6)}X^{1/2} + O(X^{1/12}),
\end{align}
and
\begin{align}
\#\widetilde{\mathcal{E}}_0(X;H_{\alpha,\beta}) = 2\left\lfloor\frac{X^{1/2}}{\beta^{1/2}}\right\rfloor = \frac{2}{\beta^{1/2}}X^{1/2}+O(1).
\end{align}
For $j=1728$, we have
\begin{align}
\# \mathcal{E}_{1728}(X;H_{\alpha,\beta}) = \frac{2}{\alpha^{1/3}\zeta(4)}X^{1/3} + O(X^{1/12})
\end{align}
and
\begin{align}
\#\widetilde{\mathcal{E}}_{1728}(X;H_{\alpha,\beta}) = 2\left\lfloor\frac{X^{1/3}}{\alpha^{1/3}}\right\rfloor = \frac{2}{\alpha^{1/3}}X^{1/3}+O(1).
\end{align}
Finally, for $j\neq 0,1728$, we have
\begin{align}\label{eqn:elliptic-count-j-asymptotic}
\# \mathcal{E}_j(X;H_{\alpha,\beta}) = \frac{2}{\zeta(2)}c(j; H_{\alpha,\beta}) X^{1/6} + O(X^{1/12})
\end{align}
and
\begin{align}\label{eqn:elliptic-count-j-exact}
\# \widetilde{\mathcal{E}}_j(X;H_{\alpha,\beta}) = 2\left\lfloor c(j;H_{\alpha,\beta}) X^{1/6}\right\rfloor
\end{align}
where $c(j; H_{\alpha,\beta})$ is the constant defined in Theorem \ref{theo: elliptic-j-invariant-exact-count}.
\end{theorem}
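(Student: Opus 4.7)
The plan is to split the argument into three cases according to whether $j=0$, $j=1728$, or $j \in \mathbb{Q}\smallsetminus\{0,1728\}$. The two exceptional $j$-invariants can be handled directly from the formula $j(E_{A,B})=1728\cdot 4A^3/(4A^3+27B^2)$: we have $j(E_{A,B})=0$ if and only if $A=0$ (with $B\neq 0$ for nonsingularity), and $j(E_{A,B})=1728$ if and only if $B=0$ (with $A\neq 0$). The generic case $j\neq 0,1728$ will be handled by invoking the parametrization established in Theorem \ref{theo: elliptic-j-invariant-exact-count}.

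First, for $j=0$, I would parametrize $\widetilde{\mathcal{E}}_0(X;H_{\alpha,\beta})$ by the nonzero integers $B$ satisfying $\beta B^2\leq X$, which immediately yields the exact formula $2\lfloor X^{1/2}/\beta^{1/2}\rfloor$. For the subfamily $\mathcal{E}_0$, the condition that $E_{0,B}$ represent a $\mathbb{Q}$-isomorphism class reduces to requiring that no prime $p$ satisfies $p^6\mid B$ (the condition $p^4\mid A$ is vacuous when $A=0$), i.e., that $B$ is $6$-free. Hence $\#\mathcal{E}_0(X;H_{\alpha,\beta})=2\,Q_6(X^{1/2}/\beta^{1/2})$, and Proposition \ref{prop: k-free-numbers-asymptotics} with $k=6$ gives the stated asymptotic with error $O((X^{1/2}/\beta^{1/2})^{1/6})=O(X^{1/12})$. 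The case $j=1728$ is completely analogous with the roles of $A$ and $B$ interchanged: the representative condition becomes that $A$ is $4$-free, and applying Proposition \ref{prop: k-free-numbers-asymptotics} with $k=4$ yields the asymptotic with error $O((X^{1/3}/\alpha^{1/3})^{1/4})=O(X^{1/12})$.

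Finally, for $j\neq 0,1728$, the exact formula for $\#\widetilde{\mathcal{E}}_j(X;H_{\alpha,\beta})$ is an immediate restatement of equation \eqref{eqn:exact-j-count-generalized-height-formula} from Theorem \ref{theo: elliptic-j-invariant-exact-count}. For the count of representatives, the characterization \eqref{eqn:Q-isomorphism-classes-elliptic-j-inv-parametrization} in that same theorem identifies $\mathcal{E}_j(X;H_{\alpha,\beta})$ bijectively with the set of square-free integers $m\neq 0$ satisfying $|m|\leq c(j;H_{\alpha,\beta})X^{1/6}$, so that $\#\mathcal{E}_j(X;H_{\alpha,\beta})=2\,Q_2(c(j;H_{\alpha,\beta})X^{1/6})$, and Proposition \ref{prop: k-free-numbers-asymptotics} with $k=2$ yields the claimed asymptotic with error $O(X^{1/12})$. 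Given the preparatory work already done by Theorem \ref{theo: elliptic-j-invariant-exact-count} and Proposition \ref{prop: k-free-numbers-asymptotics}, there is no substantial obstacle to the proof; the only delicate point is the careful verification of the $\mathbb{Q}$-isomorphism representative condition in the two exceptional cases $j=0,1728$, where the cuspidal-cubic parametrization of Theorem \ref{theo: elliptic-j-invariant-exact-count} does not directly apply and must be replaced by the simpler $k$-free condition on the single nonzero coefficient.
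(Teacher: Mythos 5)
Your proposal is correct and follows essentially the same route as the paper: the cases $j=0$ and $j=1728$ are handled by counting nonzero $6$-free values of $B$ (resp.\ $4$-free values of $A$) in the relevant interval via Proposition \ref{prop: k-free-numbers-asymptotics}, and the case $j\neq 0,1728$ is read off from the parametrization and square-free criterion of Theorem \ref{theo: elliptic-j-invariant-exact-count} together with the $k=2$ case of that proposition. No gaps.
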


\begin{proof}

We consider separately the three cases where the \( j \)-invariant is equal to 0, equal to 1728, or lies in \( \mathbb{Q} \smallsetminus \{0, 1728\} \).

\noindent \textbf{Curves with $j = 0$}

If a curve $E_{A,B}$ has $j$-invariant $0$, then $A=0$. Therefore, the set $\widetilde{\mathcal{E}}_0(X;H_{\alpha,\beta})$ corresponds to the points $(0,B)\in\Z^2$ with $ B\neq 0$ and $|B|\leq \dfrac{X^{1/2}}{\beta^{1/2}}$. Thus, since $\lfloor X \rfloor = X + O(1)$, we have
\[
\#\widetilde{\mathcal{E}}_0(X;H_{\alpha,\beta})=2\left\lfloor\frac{X^{1/2}}{\beta^{1/2}}\right\rfloor=\frac{2}{\beta^{1/2}}X^{1/2}+O(1).
\]
Furthermore, the set $\mathcal{E}_0(X;H_{\alpha,\beta})$ corresponds to points $(0,B)\in \Z^2$ such that $B\neq 0$,  $|B|\leq \dfrac{X^{1/2}}{\beta^{1/2}}$ and $B$ is $6$-free. Thus, by Proposition \ref{prop: k-free-numbers-asymptotics}, we have
\[
\# \mathcal{E}_0(X;H_{\alpha,\beta})=\frac{2}{\beta^{1/2}\zeta(6)}X^{1/2}+O(X^{1/12}).
\]

\noindent \textbf{Curves with $j = 1728$}

On the other hand, if a curve $E_{A,B}$ has $j$-invariant $1728$, then $B=0$. Thus, the set $\widetilde{\mathcal{E}}_{1728}(X;H_{\alpha,\beta})$ corresponds to points $(A,0)\in\Z^2$ with $A\neq 0$ and $|A|\leq\dfrac{X^{1/3}}{\alpha^{1/3}}$. Then, we have
\[
\#\widetilde{\mathcal{E}}_{1728}(X;H_{\alpha,\beta})=2\left\lfloor\frac{X^{1/3}}{\alpha^{1/3}}\right\rfloor=\frac{2}{\alpha^{1/3}}X^{1/3}+O(1).
\]
Moreover, the set $\mathcal{E}_{1728}(X;H_{\alpha,\beta})$ corresponds  with points $(A,0)\in\Z^2$ such that $A\neq 0$, $|A|\leq \dfrac{X^{1/3}}{\alpha^{1/3}}$ and $A$ is 4-free. Therefore, in this case Proposition \ref{prop: k-free-numbers-asymptotics} yields
\[
\# \mathcal{E}_{1728}(X;H_{\alpha,\beta})=\frac{2}{\alpha^{1/3}\zeta(4)}X^{1/3}+O(X^{1/12}).
\]

\noindent \textbf{Curves with $j \in \Q \smallsetminus \{ 0, 1728 \}$}

Now, let $j\in\Q\smallsetminus\{0,1728\}$. In Theorem \ref{theo: elliptic-j-invariant-exact-count} we already proved that
\[
\# \widetilde{\mathcal{E}}_j(X;H_{\alpha,\beta})=2\left\lfloor c(j;H_{\alpha,\beta}) X^{1/6}\right\rfloor
\]
and moreover, by the parametrization \eqref{eqn:Q-isomorphism-classes-elliptic-j-inv-parametrization}, that the set $\mathcal{E}_j(X;H_{\alpha,\beta})$ is in one to one correspondence with the set of $m \in \Z \smallsetminus \{ 0 \}$ such that $m$ is square-free and $|m|\leq c(j;H_{\alpha,\beta})X^{1/6}$. Then, again using Proposition \ref{prop: k-free-numbers-asymptotics}, we conclude that
\begin{align*}
\# \mathcal{E}_j(X;H_{\alpha,\beta}) = \frac{2}{\zeta(2)}c(j; H_{\alpha,\beta}) X^{1/6}+O(X^{1/12}).
\end{align*}

\end{proof}

Finally, as a direct consequence of Theorem \ref{theo: elliptic-curve-formulas} we prove Theorem \ref{thm:CM-theorem-generalized}, which we restate for the convenience of the reader.

\begin{theorem}\label{theo: CM-elliptic-count}
For every $X > 1$, the numbers of CM elliptic curves in the families $\mathcal{E}^{\mathrm{cm}}(X;H_{\alpha,\beta})$ and $\widetilde{\mathcal{E}}^{\mathrm{cm}}(X;H_{\alpha,\beta})$ satisfy the formulas
\begin{align}\label{eq:Ecm-asymptotics}
\#\mathcal{E}^{\mathrm{cm}}(X;H_{\alpha,\beta}) = \frac{2}{\beta^{1/2} \zeta(6)} X^{1/2} + \frac{2}{ \alpha^{1/3}\zeta(4)} X^{1/3} +\frac{2}{\zeta(2)}K(\alpha,\beta)X^{1/6} + O(X^{1/12})
\end{align}
and
\begin{align}\label{eq:Ecm-hat-asymptotics}
\#\widetilde{\mathcal{E}}^{\mathrm{cm}}(X;H_{\alpha,\beta}) = 2\left\lfloor\frac{X^{1/2}}{\beta^{1/2}}\right\rfloor+ 2\left\lfloor\frac{X^{1/3}}{\alpha^{1/3}}\right\rfloor  + \sum_{\substack{j \in \mathcal{J}^{\mathrm{cm}}\\ j \neq 0, 1728}} 2\left\lfloor c(j;H_{\alpha,\beta})X^{1/6} \right\rfloor,  
\end{align}
where
\begin{align}
K(\alpha,\beta) := \sum_{\substack{j \in\mathcal{J}^{\mathrm{cm}} \\ j \neq 0, 1728}}c(j;H_{\alpha,\beta}).
\end{align}
\end{theorem}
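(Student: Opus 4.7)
The plan is to reduce the CM counts directly to the fixed $j$-invariant counts already handled in Theorem \ref{theo: elliptic-curve-formulas}, using the fundamental fact that there are exactly thirteen $j$-invariants of CM elliptic curves over $\mathbb{Q}$. The starting point is the disjoint union decomposition in \eqref{eq:Ecm-decomposition}, which (after intersecting with the height condition $H_{\alpha,\beta}(E_{A,B}) \leq X$) gives
\[
\widetilde{\mathcal{E}}^{\mathrm{cm}}(X; H_{\alpha,\beta}) = \coprod_{j \in \mathcal{J}^{\mathrm{cm}}} \widetilde{\mathcal{E}}_j(X; H_{\alpha,\beta}) \quad \text{and} \quad \mathcal{E}^{\mathrm{cm}}(X; H_{\alpha,\beta}) = \coprod_{j \in \mathcal{J}^{\mathrm{cm}}} \mathcal{E}_j(X; H_{\alpha,\beta}).
\]
Taking cardinalities and splitting off the three regimes $j = 0$, $j = 1728$, and $j \in \mathcal{J}^{\mathrm{cm}} \smallsetminus \{0, 1728\}$ will produce the three main terms of the claimed formulas.

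For the exact formula \eqref{eq:Ecm-hat-asymptotics}, I would apply Theorem \ref{theo: elliptic-curve-formulas} termwise: the $j = 0$ piece contributes $2\lfloor X^{1/2}/\beta^{1/2} \rfloor$, the $j = 1728$ piece contributes $2\lfloor X^{1/3}/\alpha^{1/3} \rfloor$, and each of the remaining eleven CM $j$-invariants contributes $2\lfloor c(j; H_{\alpha,\beta}) X^{1/6} \rfloor$. Summing these across the disjoint union immediately yields the stated formula \eqref{eq:Ecm-hat-asymptotics}, with no error term at all since each individual count is exact.

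For the asymptotic formula \eqref{eq:Ecm-asymptotics}, the same decomposition gives
\[
\#\mathcal{E}^{\mathrm{cm}}(X; H_{\alpha,\beta}) = \#\mathcal{E}_0(X; H_{\alpha,\beta}) + \#\mathcal{E}_{1728}(X; H_{\alpha,\beta}) + \sum_{\substack{j \in \mathcal{J}^{\mathrm{cm}} \\ j \neq 0, 1728}} \#\mathcal{E}_j(X; H_{\alpha,\beta}),
\]
and substituting the three asymptotic estimates from Theorem \ref{theo: elliptic-curve-formulas} produces the leading terms $\frac{2}{\beta^{1/2}\zeta(6)} X^{1/2}$, $\frac{2}{\alpha^{1/3}\zeta(4)} X^{1/3}$, and $\frac{2}{\zeta(2)} c(j; H_{\alpha,\beta}) X^{1/6}$ for each of the eleven intermediate $j$-invariants. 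Pulling the factor $\frac{2}{\zeta(2)}$ outside the sum and recognizing $K(\alpha,\beta) = \sum_{j \neq 0, 1728} c(j; H_{\alpha,\beta})$ consolidates the tertiary term.

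The only remaining subtlety is handling the error terms. Each of the thirteen invocations of Theorem \ref{theo: elliptic-curve-formulas} carries an error of $O(X^{1/12})$, and since $|\mathcal{J}^{\mathrm{cm}}| = 13$ is an absolute constant (independent of $X$), the triangle inequality combines all thirteen errors into a single $O(X^{1/12})$. This is the reason the bound depends only on the fact that there are finitely many CM $j$-invariants over $\mathbb{Q}$; there is no genuine obstacle, just bookkeeping, since the deep work has already been done in Theorem \ref{theo: elliptic-curve-formulas} and in the parametrization of Theorem \ref{theo: elliptic-j-invariant-exact-count}. The ``hard part'' was really Theorem \ref{theo: elliptic-curve-formulas}, where the $X^{1/6}$ asymptotics for $j \neq 0, 1728$ were established via the cuspidal cubic parametrization; here we merely assemble those results.
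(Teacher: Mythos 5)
Your proposal is correct and matches the paper's own proof: both decompose $\mathcal{E}^{\mathrm{cm}}$ and $\widetilde{\mathcal{E}}^{\mathrm{cm}}$ over the thirteen CM $j$-invariants via \eqref{eq:Ecm-decomposition} and then substitute the exact and asymptotic formulas of Theorem \ref{theo: elliptic-curve-formulas}, absorbing the finitely many $O(X^{1/12})$ error terms into one. No gaps.
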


\begin{proof}
By the decomposition \eqref{eq:Ecm-decomposition} we have
\begin{align*}
    \widetilde{\mathcal{E}}^{\mathrm{cm}}(X; H_{\alpha, \beta}) = \coprod_{j \in \mathcal{J}^{\textrm{cm}}} \widetilde{\mathcal{E}}_j(X; H_{\alpha, \beta}) \quad \text{and} \quad \mathcal{E}^{\mathrm{cm}}(X; H_{\alpha, \beta}) = \coprod_{j \in \mathcal{J}^{\textrm{cm}}} \mathcal{E}_j (X; H_{\alpha, \beta}).
\end{align*}
This implies that
\begin{align*}
    \# \widetilde{\mathcal{E}}^{\mathrm{cm}}(X; H_{\alpha, \beta}) = \sum_{j \in \mathcal{J}^{\textrm{cm}}} \# \widetilde{\mathcal{E}}_j(X; H_{\alpha, \beta}) \quad \text{and} \quad \# \mathcal{E}^{\mathrm{cm}}(X; H_{\alpha, \beta}) = \sum_{j \in \mathcal{J}^{\textrm{cm}}} \# \mathcal{E}_j (X; H_{\alpha, \beta}).
\end{align*}
Then the theorem follows by plugging in the formulas from Theorem \ref{theo: elliptic-curve-formulas}.
\end{proof}

\subsection{Proofs of Theorems \ref{thm:distribution-reps-all-curves} and \ref{thm:distribution-reps-fixed-j}} Theorems \ref{thm:distribution-reps-all-curves} and \ref{thm:distribution-reps-fixed-j} follow from an application of the following lemma to the asymptotic formulas given in Theorems \ref{thm:global-counts-generalized} and \ref{theo: elliptic-curve-formulas}, respectively.

\begin{lemma}
Let $a, b, c \in \R$ with $a>0$ and $b,c<a$. Suppose that $A(X)$ and $B(X)$ are real-valued functions that satisfy the asymptotic formulas
\[
A(X)=C X^{a}+O(X^{b})
\quad \text{and} \quad
B(X)=D X^{a}+O(X^{c}),
\]
for some constants $C,D$ with $D\neq 0$. Then, as $X\to\infty$, we have
\[
\frac{A(X)}{B(X)} =
\frac{C}{D} + O\!\left(X^{-(a-\max\{b,c\})}\right).
\]
\end{lemma}

\begin{proof}
Factoring the main terms we get
\[
\frac{A(X)}{B(X)}
= \frac{C X^{a}\left(1+O\!\left(X^{b-a}\right)\right)}{D X^{a}\!\left(1+O\left(X^{c-a}\right)\right)} = 
\frac{C}{D}
\,
\frac{1+O\!\left(X^{b-a}\right)}{1+O\!\left(X^{c-a}\right)}.
\]
Now, since $c<a$, we have that $X^{c-a}\to 0$, and therefore
\[
\frac{1}{1+O\!\left(X^{c-a}\right)}
=
1+O\!\left(X^{c-a}\right).
\]
It follows that
\[
\frac{A(X)}{B(X)}
=
\frac{C}{D}
\Big(1+O\!\Big(X^{b-a}\Big)\Big)
\left(1+O\!\left(X^{c-a}\right)\right)
=
\frac{C}{D}
+
O\!\left(X^{-(a-\max\{b,c\})}\right).
\]
\end{proof}

\section{Explicit computations and data for CM elliptic curves}\label{section:explicit-computations}
In this section, we present explicit computations related to CM elliptic curves ordered by naive height. We include numerical data for very large values of the height, extending beyond the ranges typically considered in experimental computations by taking advantage of our theoretical results. In particular, Table \ref{table: CM-elliptic-minimal-naive-height} gives short Weierstrass equations for the curves of minimal naive height associated to each CM $j$-invariant, while Table \ref{table:Predicted-elliptic-curves-for-given-height} provides counts of CM elliptic curves corresponding to each of the thirteen possible CM orders with class number one, up to height $10^{30}$. These computations offer additional insight into the distribution and structure of CM curves in practice and serve as a computational complement to the theoretical developments discussed earlier.

\begin{table}[H]
\centering
{\tabulinesep=1.2mm
\begin{tabu}{|c|c|c|c|c|}
\hline
$d_K$ & $f$ & CM $j$-invariant & Curves with minimal height & Minimal naive height $H^{\mathrm{cal}}$ \\
\hline
\multirow{3}{*}{$-3$} 
  & 1 & $0$ & $y^2 = x^3 \pm 1$ & 27 \\
  & 2 & $2^4 \cdot 3^3 \cdot 5^3$ & $y^2 = x^3 - 15x \pm 22$ & \num{13500} \\
  & 3 & $-2^{15} \cdot 3 \cdot 5^3$ & $y^2 = x^3 - 120x \pm 506$ & \num{6912972} \\
\hline
\multirow{2}{*}{$-4$} 
  & 1 & $2^6 \cdot 3^3 = 1728$ & $y^2 = x^3 + x$ & 4 \\
  & 2 & $2^3 \cdot 3^3 \cdot 11^3$ & $y^2 = x^3 - 11x \pm 14$ & \num{5324} \\
\hline
\multirow{2}{*}{$-7$}
  & 1 & $-3^3 \cdot 5^3$ & $y^2 = x^3 - 35x \pm 98$ & \num{259308} \\
  & 2 & $3^3 \cdot 5^3 \cdot 17^3$ & $y^2 = x^3 - 595x \pm 5586$ & \num{842579500} \\
\hline
$-8$ & 1 & $2^6 \cdot 5^3$ & $y^2 = x^3 - 30x \pm 56$ & \num{108000} \\
\hline
$-11$ & 1 & $-2^{15}$ & $y^2 = x^3 - 264x \pm 1694$ & \num{77480172} \\
\hline
$-19$ & 1 & $-2^{15} \cdot 3^3$ & $y^2 = x^3 - 152x \pm 722$ & \num{14074668} \\
\hline
$-43$ & 1 & $-2^{18} \cdot 3^3 \cdot 5^3$ & $y^2 = x^3 - 3440x \pm 77658$ & \num{162830654028} \\
\hline
$-67$ & 1 & $-2^{15} \cdot 3^3 \cdot 5^3 \cdot 11^3$ & $y^2 = x^3 - 29480x \pm 1948226$ & \num{102480782771052} \\
\hline
$-163$ & 1 & $-2^{18} \cdot 3^3 \cdot 5^3 \cdot 23^3 \cdot 29^3$ & $y^2 = x^3 - 8697680x \pm 9873093538$ & \num{2631905352272628650988} \\
\hline
\end{tabu}}
\caption{The CM elliptic curves over $\Q$ of minimal calibrated naive height for each of the thirteen possibilities for the endomorphism ring.}
\label{table: CM-elliptic-minimal-naive-height}
\end{table}

\begin{table}[H]
\centering
{\tabulinesep=1.2mm
\begin{tabu}{|c|c|c|*{5}{c}|}
\cline{4-8}
\multicolumn{3}{c|}{} 
  & \multicolumn{5}{c|}{$\# \widetilde{\mathcal{E}}_j(X, H^{\mathrm{cal}})$ for $X = 10^n$} \\
\hline
$d_K$ & $f$ & CM $j$-invariant 
 & $10^{10}$ & $10^{15}$ & $10^{20}$ & $10^{25}$ & $10^{30}$ \\
\hline
\multirow{3}{*}{$-3$} 
  & 1 & $0$ & 38490 & 12171612 & 3849001794 & 1217161238900 & 384900179459750 \\
  & 2 & $2^4 \cdot 3^3 \cdot 5^3$ & 18 & 128 & 882 & 6014 & 40986 \\
  & 3 & $-2^{15} \cdot 3 \cdot 5^3$ & 6 & 44 & 312 & 2126 & 14490 \\
\hline
\multirow{2}{*}{$-4$} 
  & 1 & $2^6 \cdot 3^3 = 1728$ & 2714 & 125992 & 5848034 & 271441760 & 12599210498 \\
  & 2 & $2^3 \cdot 3^3 \cdot 11^3$ & 22 & 150 & 1030 & 7024 & 47860 \\
\hline
\multirow{2}{*}{$-7$}
  & 1 & $-3^3 \cdot 5^3$ & 10 & 78 & 538 & 3676 & 25044 \\
  & 2 & $3^3 \cdot 5^3 \cdot 17^3$ & 2 & 20 & 140 & 954 & 6506 \\
\hline
$-8$ & 1 & $2^6 \cdot 5^3$ & 12 & 90 & 624 & 4252 & 28980 \\
\hline
$-11$ & 1 & $-2^{15}$ & 4 & 30 & 208 & 1420 & 9686 \\
\hline
$-19$ & 1 & $-2^{15} \cdot 3^3$ & 4 & 40 & 276 & 1888 & 12870 \\
\hline
$-43$ & 1 & $-2^{18} \cdot 3^3 \cdot 5^3$ & 0 & 8 & 58 & 396 & 2706 \\
\hline
$-67$ & 1 & $-2^{15} \cdot 3^3 \cdot 5^3 \cdot 11^3$ & 0 & 2 & 18 & 134 & 924 \\
\hline
$-163$ & 1 & $-2^{18} \cdot 3^3 \cdot 5^3 \cdot 23^3 \cdot 29^3$ & 0 & 0 & 0 & 6 & 52 \\
\hline
\multicolumn{3}{|c|}{$\# \widetilde{\mathcal{E}}^{\mathrm{cm}}(X, H^{\mathrm{cal}})$}
  & 41282 & 12418204 & 3932003556 & 1240837801110 & 392324168837610 \\
\hline
\end{tabu}}
\caption{Cardinalities $\# \widetilde{\mathcal{E}}_j(X, H^{\mathrm{cal}})$ for CM $j$-invariants at various height bounds $X$, computed with the exact formulas \eqref{eqn:exact-formulas-calibrated-naive-height}. The final row shows the total count over all CM $j$-invariants.}
\label{table:Predicted-elliptic-curves-for-given-height}
\end{table}

For discriminant $d_K=-163$, Table \ref{table:Predicted-elliptic-curves-for-given-height} shows that there are exactly six elliptic curves $E_{A,B}\in\widetilde{\mathcal{E}}_j(10^{25}, H^{\mathrm{cal}})$ with $j$-invariant $j=-2^{18} \cdot 3^3 \cdot 5^3 \cdot 23^3 \cdot 29^3$. These curves are the ones obtained by taking $m \in \{\pm 1, \pm 2, \pm 3 \}$ in the parametrized family of curves \eqref{eqn:curves-163-parametrization} given by
\begin{align*}
E_{A(m), B(m)} \colon y^2 = x^3  + A(m)x + B(m),
\end{align*}
where
\begin{align*}
A(m) := -(2^{4} \cdot 5 \cdot 23 \cdot 29 \cdot 163)m^2 \quad \text{and} \quad
B(m) := -(2 \cdot 7 \cdot 11 \cdot 19 \cdot 127 \cdot 163^{2})m^3.
\end{align*}
Explicitly, these curves are the following:
\begin{align*}
E_{A(\pm 1),B(\pm 1)} \colon &y^2=x^3 - 8697680x \pm 9873093538, \\
E_{A(\pm 2),B(\pm 2)} \colon &y^2=x^3-34790720x \pm 78984748304,\\
E_{A(\pm 3),B(\pm 3)} \colon &y^2=x^3-78279120x\pm 266573525526.
\end{align*}

In Theorem \ref{thm:fixed-j-theorem-generalized}, we showed that for every $j\in\Q\smallsetminus\{0,1728\}$, the number of elliptic curves in the family $\widetilde{\mathcal{E}}_j(X;H^{\mathrm{cal}})$ satisfies the asymptotic formula
\begin{align}\label{eqn:asymptotic-formula-fixed-j-computation-section}
\#\mathcal{E}_j(X; H^{\mathrm{cal}})=\dfrac{2c(j; H^{\mathrm{cal}})}{\zeta(2)} X^{1/6} + O(X^{1/12}).
\end{align}
Now, in Table~\ref{table: Calculated-terms-j-fixed}, we compute the value of the leading coefficient in this formula for each CM $j$-invariant $j \in \mathcal{J}^{\mathrm{cm}} \smallsetminus \{0, 1728\}$, using \texttt{SageMath}. The reader should observe in Table \ref{table: Calculated-terms-j-fixed} that the constant $\dfrac{2c(j;H^{\mathrm{cal}})}{\zeta(2)}$ corresponding to $d_k=-163$ and $j=-2^{18} \cdot 3^3 \cdot 5^3 \cdot 23^3 \cdot 29^3$ has the smallest value, a fact that is directly related to the observation made in Table  \ref{table: CM-elliptic-minimal-naive-height} that it is precisely for this $j$-invariant that the family $\widetilde{\mathcal{E}}_j(X;H^{\mathrm{cal}})$ has the largest minimal naive height for the elliptic curves with complex multiplication.

\begin{table}[H]
\centering
{\tabulinesep=1.2mm
\begin{tabu}{c c c l} \hline
$d_K$ & $f$ & CM $j$-invariant & $\dfrac{2c(j;H^{\mathrm{cal}})}{\zeta(2)}$ \\ \hline
$-3$ & $2$ & $2^{4} \cdot 3^{3} \cdot 5^{3}$ & $0.2491681566$ \\ 
$-3$ & $3$ & $- 2^{15} \cdot 3 \cdot 5^{3}$ & $0.08809218206$ \\ 
$-4$ & $2$ & $2^{3} \cdot 3^{3} \cdot 11^{3}$ & $0.2909657203$ \\ 
$-7$ & $1$ & $- 3^{3} \cdot 5^{3}$ & $0.1522575538$ \\ 
$-7$ & $2$ & $3^{3} \cdot 5^{3} \cdot 17^{3}$ & $0.03956213184$ \\ 
$-8$ & $1$ & $2^{6} \cdot 5^{3}$ & $0.1761884932$ \\ 
$-11$ & $1$ & $- 2^{15}$ & $0.05888658982$ \\ 
$-19$ & $1$ & $- 2^{15} \cdot 3^{3}$ & $0.07824834141$ \\ 
$-43$ & $1$ & $- 2^{18} \cdot 3^{3} \cdot 5^{3}$ & $0.01645351934$ \\ 
$-67$ & $1$ & $- 2^{15} \cdot 3^{3} \cdot 5^{3} \cdot 11^{3}$ & $0.005620493221$ \\ 
$-163$ & $1$ & $-2^{18} \cdot 3^{3} \cdot 5^{3} \cdot 23^{3} \cdot 29^{3}$ & $0.0003272174502$ \\ \hline
\end{tabu}}
\caption{Values of the main coefficients $\dfrac{2c(j;H^{\mathrm{cal}})}{\zeta(2)}$ in the asymptotic formulas \eqref{eqn:asymptotic-formula-fixed-j-computation-section} for the calibrated naive height, for the distinct CM $j$-invariants $j \in \mathcal{J}^{\mathrm{cm}} \smallsetminus \{ 0, 1728 \}$.}
\label{table: Calculated-terms-j-fixed}
\end{table}

\section{Acknowledgments}
The first author would like to thank Diana Ramírez Leitón for her helpful advice on preparing and documenting the code for publication on GitHub. The authors thank the Centro de Investigación en Matemática Pura y Aplicada (CIMPA) and the School of Mathematics of the University of Costa Rica for their administrative help and support during this project. This paper was written as part of research project 821-C5-139, led by the first author and registered with the Vicerrectoría de Investigación of the University of Costa Rica.

\bibliography{elliptic-counts-biblio.bib}
\bibliographystyle{alphaurl}

\end{document}